\newtheorem{theorem}{Theorem}[section]
\newtheorem{lemma}{Lemma}[section]
\theoremstyle{definition}
\newtheorem{remark}{Remark}
\numberwithin{equation}{section}
\begin{document}

\long\def\symbolfootnote[#1]#2{\begingroup%
\def\thefootnote{\fnsymbol{footnote}}\footnote[#1]{#2}\endgroup} 
\title{\textbf{A note on composition operators on the disc and bidisc.}}
\author{Athanasios Beslikas}
\maketitle
\begin{abstract} 
In this note we give a new necessary condition for the boundedness of the composition operator on the Dirichlet-type space on the disc, via a two dimensional change of variables formula. With the same formula, we characterise the bounded composition operators on the anisotropic Dirichlet-type spaces $\mathfrak{D}_{\vec{a}}(\mathbb{D}^2)$ induced by holomorphic self maps of the bidisc $\mathbb{D}^2$ of the form $\Phi(z_1,z_2)=(\phi_1(z_1),\phi_2(z_2))$. We also consider the problem of boundedness of  composition operators $C_{\Phi}:\mathfrak{D}(\mathbb{D}^2)\to A^2(\mathbb{D}^2)$ for general self maps of the bidisc, applying some recent results about Carleson measures on the the Dirichlet space of the bidisc.  \\\\
\textbf{Mathematics Subject Classification:} primary: 32A37, secondary: 30H20.\\
\textbf{Keywords:} \textit{Dirichlet-type spaces, composition operators.}
\end{abstract}
 \section{Introduction} In the present work we will study the composition operators in the Dirichlet-type spaces on the setting of the unit disc and the bidisc. Moreover, the problem of boundedness of the composition operator between the Bergman and Dirichlet spaces on the bidisc $\mathbb{D}^2$ is studied. A standard reference for composition operators in the Hardy and Bergman space is the celebrated work of Shapiro \cite{Shapiro}. Another very influential work is the article of Pau and Perez, in which the authors provide criteria for the boundedness, compactness and closed range composition operators on the Dirichlet-type space on the unit disc, mainly using the \textit{generalised Nevanlinna counting function} and the pull-back measure. In this note we will provide a new sufficient condition for the boundedness of the composition operator acting on the Dirichlet space, which utilises the \textit{generalised Nevanlinna counting function}, along with some other novel techniques. More precisely, we will follow the approach of Pau and Perez on the bidisc setting, for symbols $\Phi=(\varphi(z_1),\varphi(z_2)),$ and obtain a characterization for boundedness of the composition operator on the bidisc via the \textit{generalized Nevanlinna counting function}. Also, we will provide some insight on the problem of boundedness of the composition operator $C_{\Phi}:\mathfrak{D}(\mathbb{D}^2)\to A^2(\mathbb{D}^2)$ using known results for Carleson measures on the Dirichlet space of the bidisc, as well as new results on the matter.\\The problem of boundedness of the composition operator on the bidisc and tridisc for the Hardy and Bergman spaces, has been treated by Kosinski and Bayart in their papers \cite{Kosinski}, \cite{Bayart1}, \cite{Bayart2} respectively, for symbols $\Phi$ who are $\mathcal{C}^2$ smooth on the closed bidisc $\overline{\mathbb{D}^2}.$ In our note, we establish a connection of the above mentioned problem with the Carleson measures for Dirichlet-type spaces on the bidisc.  Let us now give some essential notation. For the whole length of the paper, we denote by $\mathbb{D}$ the unit disc on the one-dimensional complex plane, and by $\varphi:\mathbb{D}\to\mathbb{D}$ a holomorphic self-map of the disc. For a holomorphic function $f$ on the disc or the bidisc, we will write $f\in H(\mathbb{D})$ or $f\in\mathbb{D}^2$ respectively. By 
 $\mathbb{D}^2=\{z\in \mathbb{C}^2: |z_1|<1,|z_2|<1\},$ we denote the bidisc in the two-dimensional complex plane, while by
 $\Phi:\mathbb{D}^2\to \mathbb{D}^2$ we denote a holomorphic self-map of the unit bidisc. Consider now the composition operators $$C_{\varphi}f=f\circ \varphi(z),z\in\mathbb{D}$$ and
 $$C_{\Phi}f=f\circ\Phi(z_1,z_2)=f(\varphi_1(z_1,z_2),\varphi_2(z_1,z_2)),$$
where $z_1,z_2\in \mathbb{D}$ and both $\varphi_1,\varphi_2$ map the bidisc to the unit disc. Of course, if $f$ is a holomorphic function in one, or two variables, then the composition operator induces a holomorphic function on the disc or the bidisc respectively. 
\subsection{Plan of the paper} The presentation of the results of this note will go as follows: In Section 2 we prove a two-dimensional change of variables for \textit{ separated symbols}. We apply this useful tool to prove the two main results of this section, namely, a new necessary condition for boundedness of the composition operator acting on the Dirichlet space of the disc, and a characterization of the holomorphic self-maps of the bidisc of the form $$\Phi=(\varphi_1(z_1),\varphi_2(z_2)).$$ Note that these symbols have one-dimensional functions as co-ordinate functions. By following the standard techniques of \cite{Pau} we will prove a characterization of the boundedness of the composition operator acting on the anisotropic Dirichlet space $\mathfrak{D}_{\vec{a}}(\mathbb{D}^2)$ for these symbols.\\
 In Section 3, we give a sufficient condition for the composition operator  $C_{\Phi}:\mathfrak{D}(\mathbb{D}^2)\to A^2(\mathbb{D}^2)$ to be bounded, by proving a sufficient condition for a positive Borel measure to be Carleson measure on the Dirichlet space of the bidisc.  We will also prove a necessary and sufficient condition for the boundedness of the composition operator $C_{\Phi}:\mathfrak{D}(\mathbb{D}^2)\to A^2(\mathbb{D}^2)$ , applying a very important result of N.Arcozzi, P.Mozolyako, K-M.Perfekt and G.Sarfatti, in their seminal work \cite{Arcozzi2}
 which characterises the Carleson measures in the Dirichlet space of the bi-disc. The main result of this section can be stated only for the unweighted Dirichlet space. The condition we obtain is necessary and sufficient for the composition operator $C_{\Phi}:\mathfrak{D}(\mathbb{D}^2)\to A^2(\mathbb{D}^2)$ to be bounded but it is not as pleasant as one could wish because it involves a union of Carleson boxes and not one box as in Theorem 10 of \cite{Kosinski}.
 \subsection{Definitions and tools}  In this first section we will present some backround material, essential for the interested reader. First and foremost, we recall that the Dirichlet-type space $\mathfrak{D}_{a}(\mathbb{D})$ is consisted of all holomorphic functions on the disc, which satisfy
 $$\int_{\mathbb{D}}|f'(z)|^2(1-|z|^2)^{a}dA(z)<+\infty $$ for $0<a<1.$ These spaces have been studied extensively in several works, one of which is \cite{ransf}. The problem of boundedness and compactness of the composition operator in these spaces has been treated in \cite{Pau}.
 Now we pass to the setting of the bi-disc. The anisotropic Dirichlet-type spaces on the bidisc $\mathbb{D}^2$, denoted by $\mathfrak{D}_{\vec{a}}(\mathbb{D}^2)$, where $\vec{a}=(a_1,a_2)\in\mathbb{R}^2,$ are defined as follows: Firstly, we remind the interested reader that if $f\in H(\mathbb{D}^2)$ then $f$ has a series expansion as follows:
 $$f(z_1,z_2)=\sum_{k\ge 0} \sum_{k\ge 0}a_{k,l}z_1^kz_2^l.$$
 We say that a holomorphic function $f$ on the bidisc, belongs to the Dirichlet type spaces if
 $$||f||^2_{\mathfrak{D}_{\vec{a}}(\mathbb{D}^2)}=\sum_{k\ge 0} \sum_{k \ge 0}(k+1)^{2a_1}(l+1)^{2a_2}|a_{k,l}|^2<\infty. $$
 For $0<a_1,a_2\leq\frac{1}{2}$ we can obtain an equivalent integral representation of the norm of a function in the Dirichlet-type spaces. Specifically,
 $$||f||^2_{\mathfrak{D}_{\vec{a}}(\mathbb{D}^2)}=|f(0,0)|^2+\mathfrak{D}_{\vec{a}}(f),$$
 where, 
 \begin{multline}
 \mathfrak{D}_{\vec{a}}(f)=\int_{\mathbb{D}}|\partial_{z_1}f(z_1,0)|^2dA_{a_1}(z_1)+\int_{\mathbb{D}}|\partial_{z_2}f(0,z_2)|^2dA_{a_2}(z_2)+\\ \int_{\mathbb{D}^2}|\partial_{z_2}\partial_{z_1}f(z_1,z_2)|^2dA_{a_1}(z_1)dA_{a_2}(z_2), i=1,2.
 \end{multline}
where $$dA_{a_i}(z_k)=(1-|z_k|^2)^{1-2a_i}dA(z_k),$$ and $dA(z_k)$ is the Lebesgue measure on the disc, that is $dA(z_k)=\frac{1}{\pi}dx_kdy_k.$
For the case where $a_1=a_2=1/2$ we obtain the classical Dirichlet space of the bidisc, as it was defined in \cite{Kaptanoglu} by Kaptanoglu.
 It is an easy fact to check that these spaces are Hilbert spaces with respect to the norm defined above, with the usual, naturally induced inner product. Also, they are invariant under unitary transformations but not invariant under automorphisms of the bidisc, as it is showcased in \cite{Kaptanoglu}. These spaces enjoy the property of bounded point evaluation functional. For more on the basic theory of the Dirichlet type spaces on the bidisc one can study \cite{Kaptanoglu}. Furthermore, in the seminal work of Knese, Kosinski, Ransford and Sola,  \cite{Kosinski2} one can find characterisations about the cyclic polynomials on the above mentioned spaces. In the present note we introduce the problem of the boundedness of the composition operator. Until now, no work has been found in the literature regarding this specific problem. As an initial step, we characterize the symbols $\Phi$ of the form $\Phi(z_1,z_2)=(\varphi_1(z_1),\varphi_2(z_2)),z_1,z_2\in\mathbb{D}$ (each $\varphi_i,i=1,2$ is a holomorphic self map of the unit disc $\mathbb{D})$ that induce bounded composition operator on $\mathfrak{D}_{\vec{a}}(\mathbb{D}^2).$ For these symbols the composition operator $C_{\Phi}:\mathfrak{D}_{\vec{a}}(\mathbb{D}^2)\to \mathfrak{D}_{\vec{a}}(\mathbb{D}^2)$ is easier to tame. In the more general case, where both co-ordinate functions are mapping the bidisc to the disc, things are more difficult.\\ The general case, due to the insurmountable amount of calculations seems much more difficult to handle. To explain further, the techniques which allowed Bayart and Kosinski to prove the main results of their papers was the known characterization of the Carleson measures. Taking the corresponding pull-back measure provided them with the basic tool. On the Dirichlet-type spaces case, for general symbols, this is not easy to obtain, as there are derivations which significantly change the nature of the pull-back measure. Instead of the general case, we will consider an alternative problem, and that is of the boundedness of the composition operator between the classical Bergman space $A^2_{\beta}(\mathbb{D}^2),\beta>-1,$ which is consisted of the holomorphic functions in $\mathbb{D}^2$ which satisfy $$\int_{\mathbb{D}^2}|f(z_1,z_2)|^2dV_{\beta}(z_1,z_2)<\infty,$$ where $dV_{\beta}(z_1,z_2)=dA_{\beta}(z_1)\times dA_{\beta}(z_2),$ and the Dirichlet space on the bidisc that we briefly discussed before. One can realise the difficulty and the technicality of the problem by studying the recent works of Bayart \cite{Bayart1} and Kosinski \cite{Kosinski} respectively, for the Hardy and Bergman spaces of the bidisc and tridisc respectively. 
 \section{A necessary condition and the case of separated symbols} Throughout this section we consider symbols $\Phi$ of the form
 $$\Phi(z_1,z_2)=(\varphi_1(z_1),\varphi_2(z_2)),z_1,z_2\in \mathbb{D}$$
 where both $\varphi_1,\varphi_2$ are holomorphic self maps of the unit disc.
  We shall call these symbols "separated symbols" for convenience. In the sequel we will first prove a new sufficient condition for the boundedness of the composition operator acting on $\mathfrak{D}_a(\mathbb{D}).$ In order to establish our results, a useful tool will be the \textit{Generalised Nevanlinna Counting Function} 
  $$\mathcal{N}_{\varphi,\alpha}(z)=\sum_{\varphi(w)=z}\left(\log\frac{1}{|w|}\right)^{1-2a},z\in \mathbb{D}\setminus\{\varphi(0)\},$$ where $\varphi$ is a holomorphic self map of the disc. We will also need a recent characterization of the Dirichlet-type spaces, specifically the following result of Balooch and Wu \cite{Balooch}.
   \begin{theorem}
   \textit{Let $\sigma, \tau \ge -1 $ and $\beta\in \mathbb{R},$ such that $\frac{\max(\sigma,\tau)}{2}-1<\beta\leq \frac{\sigma+\tau}{2}.$ Let $f\in H(\mathbb{D}).$ Then:}
   $$\int_{\mathbb{D}}\int_{\mathbb{D}}\frac{|f(z)-f(w)|^2}{|1-\overline{w}z|^{2(\beta+2)}}dA_{\sigma}(z)dA_{\tau}(w)\asymp ||f||^2_{\mathcal{D}_{\sigma+\tau-2\beta}(\mathbb{D})}.$$
   \end{theorem}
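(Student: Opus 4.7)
The plan is to prove this norm equivalence by Taylor-coefficient comparison. Writing $f(z)=\sum_{n\ge 0}a_n z^n$, the standard coefficient description of the Dirichlet-type space yields
\[
\|f\|^2_{\mathcal D_{\sigma+\tau-2\beta}(\mathbb D)}\asymp \sum_{n\ge 1} n^{1-(\sigma+\tau-2\beta)}|a_n|^2,
\]
so the task is to show that the double integral on the left reduces to an equivalent coefficient sum. Observe first that since $f(z)-f(w)=\sum_{n\ge 1}a_n(z^n-w^n)$, the constant Taylor coefficient is automatically annihilated, which is consistent with the right-hand side killing $a_0$. I would then expand the kernel as
\[
\frac{1}{|1-\overline{w}z|^{2(\beta+2)}}=\sum_{k,\ell\ge 0}\gamma_k\gamma_\ell\,z^k\bar z^\ell\,\bar w^k w^\ell,\qquad \gamma_k=\frac{\Gamma(k+\beta+2)}{k!\,\Gamma(\beta+2)},
\]
and integrate termwise against $dA_\sigma\otimes dA_\tau$. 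By rotational invariance of the weights only diagonal monomials survive, and the four pieces arising from $(z^n-w^n)(\bar z^m-\bar w^m)$ all force $n=m$. The left-hand side therefore collapses to a single diagonal series $\sum_{n\ge 1}|a_n|^2 C_n$, where
\[
C_n=\sum_{k\ge 0}\Bigl[\gamma_k^2\rho_{n+k}(\sigma)\rho_k(\tau)-2\gamma_k\gamma_{n+k}\rho_{n+k}(\sigma)\rho_{n+k}(\tau)+\gamma_k^2\rho_k(\sigma)\rho_{n+k}(\tau)\Bigr]
\]
with $\rho_j(a):=\int_{\mathbb D}|z|^{2j}dA_a(z)=B(j+1,a+1)$.

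The next step is to extract the asymptotic order of $C_n$. Stirling's formula gives $\gamma_k\asymp (k+1)^{\beta+1}$ and $\rho_j(a)\asymp (j+1)^{-(a+1)}$, and the hypothesis $\beta>\max(\sigma,\tau)/2-1$ is exactly what is required to guarantee Forelli--Rudin-type integrability of each of the three partial sums in $C_n$; the upper constraint $\beta\le(\sigma+\tau)/2$ is what ensures the leading exponent is at most one, matching the Dirichlet-type regime. A careful Stirling/Euler--Maclaurin analysis of each of the three pieces then shows that $C_n\asymp n^{1-(\sigma+\tau-2\beta)}$, which combined with the coefficient description above yields the claimed equivalence.

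The main obstacle I anticipate is the cancellation inside $C_n$: the three individual sums generically have the \emph{same} leading order, and at the endpoint $\beta=(\sigma+\tau)/2$ some of them are only borderline convergent, so the correct asymptotic must come from a delicate comparison of constants rather than from a term-by-term bound. To sidestep this, an alternative route is to prove one direction analytically and the other by duality: for the upper bound one writes $f(z)-f(w)=(z-w)\int_0^1 f'(w+t(z-w))\,dt$, applies Cauchy--Schwarz and Fubini, and uses the classical estimates
\[
\int_{\mathbb D}\frac{dA_\tau(w)}{|1-\overline{w}z|^{2(\beta+2)}}\asymp (1-|z|^2)^{\tau-2\beta-2}
\]
(valid precisely under $\beta>\tau/2-1$) together with an affine change of variables to produce a weighted integral of $|f'|^2$ against $(1-|\xi|^2)^{\sigma+\tau-2\beta}dA(\xi)$. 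The reverse inequality would then follow by testing against the reproducing kernels of $\mathcal D_{\sigma+\tau-2\beta}(\mathbb D)$, for which the coefficient expansion above makes both sides computable and comparable.
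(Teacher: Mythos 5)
First, a remark on the comparison you asked for: the paper does not prove this statement at all. It is quoted as a known result of Balooch and Wu \cite{Balooch} and used as a black box in the proof of the necessary condition for $C_{\varphi}$ on $\mathfrak{D}_a(\mathbb{D})$, so there is no internal proof to measure your argument against; your attempt has to stand on its own.

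On its own terms, your reduction is correct and is the natural one: both sides are quadratic forms diagonalized by the monomials (rotation invariance of $dA_{\sigma}\otimes dA_{\tau}$ does force $n=m$ in all four pieces, and your formula for $C_n$ checks out), so the theorem is equivalent to the two-sided estimate $C_n\asymp n^{1-(\sigma+\tau-2\beta)}$. The genuine gap is that you stop exactly where the work begins. With $\gamma_k\asymp(k+1)^{\beta+1}$ and $\rho_j(a)\asymp(j+1)^{-(a+1)}$, each of the three sums constituting $C_n$ has general term of size $k^{2\beta-\sigma-\tau}$ for $k\gg n$, so at the admissible endpoint $\beta=(\sigma+\tau)/2$ --- which lies inside the range the paper actually uses, since there $\sigma=\tau$ and $\beta\leq\sigma$ is allowed with equality --- every one of the three sums \emph{diverges} individually; only the combination converges, and the asserted asymptotics of $C_n$ rest entirely on a cancellation you never exhibit. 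The sentence ``a careful Stirling/Euler--Maclaurin analysis then shows $C_n\asymp n^{1-(\sigma+\tau-2\beta)}$'' is the whole theorem. (A concrete way to close it: show that $j\mapsto\gamma_j\sqrt{\rho_j(\sigma)\rho_j(\tau)}$ is essentially decreasing when $\beta\leq(\sigma+\tau)/2$, so that the bracketed summand in $C_n$ is pointwise nonnegative by AM--GM; then split the sum at $k\sim n$ and estimate each range, using $\beta>\max(\sigma,\tau)/2-1$ for the convergence near $k=0$.) Your fallback route has the same status: the upper bound via $f(z)-f(w)=(z-w)\int_0^1 f'\,dt$, Cauchy--Schwarz and Forelli--Rudin is plausible, but the change of variables along the segment is the delicate step and is not carried out; and ``testing against reproducing kernels'' does not by itself give the lower bound for arbitrary $f$ --- it only does so after one has already invoked the diagonal structure, which sends you straight back to estimating $C_n$. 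So the architecture is right, but the analytic core is missing.
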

   The notation "$\asymp$" means that the quantities on both sides are equivalent. For what follows, we denote by $$k^{\varphi}(z_1,z_2)=\frac{1-\varphi(z_1)\overline{\varphi(z_2)}}{1-z_1\overline{z_2}}.$$ The main results of this section are the following:
   \begin{theorem} \textit{Let $\varphi:\mathbb{D}\to\mathbb{D}$ be a holomorphic self map of the disc. Assume that $C_{\varphi}:\mathfrak{D}_a(\mathbb{D})\to \mathfrak{D}_a(\mathbb{D})$ is bounded, for $a=2\sigma-2\beta, \sigma-1<\beta\leq \sigma.$ Then:}
   \begin{enumerate}
   \item $$\sup_{z_1,z_2\in\mathbb{D}}\frac{|k^{\varphi}(z_1,z_2)|}{|\varphi'(z_1)\cdot \varphi'(z_2)|^{\frac{1}{\beta+2}}}<+\infty$$
   \item $$||C_{\varphi}||_{\mathrm{operator}}\leq C \left(\sup_{z_1,z_2\in\mathbb{D}}\frac{|k^{\varphi}(z_1,z_2)|}{|\varphi'(z_1)\cdot \varphi'(z_2)|^{\frac{1}{\beta+2}}}\right)^{\beta+2},$$
   \textit{where $||\cdot||_{\mathrm{operator}}$ denotes the operator norm.}
   \end{enumerate}
   \end{theorem}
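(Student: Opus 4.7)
The plan is to route everything through the Balooch--Wu double-integral characterization stated above. Specializing that theorem to $\tau=\sigma$ and invoking the hypothesis $a=2\sigma-2\beta$ gives
\[
\|f\|^{2}_{\mathfrak{D}_a(\mathbb{D})} \;\asymp\; \iint_{\mathbb{D}\times\mathbb{D}}\frac{|f(u)-f(v)|^{2}}{|1-\bar v u|^{2(\beta+2)}}\,dA_\sigma(u)\,dA_\sigma(v).
\]
Applying this equivalence to $C_\varphi f=f\circ\varphi$ and then multiplying and dividing the integrand by $|1-\overline{\varphi(w)}\varphi(z)|^{2(\beta+2)}$ to bring in the factor $|k^{\varphi}(z,w)|^{2(\beta+2)}$ yields
\[
\|C_\varphi f\|^{2}_{\mathfrak{D}_a(\mathbb{D})}\;\asymp\;\iint\frac{|f(\varphi(z))-f(\varphi(w))|^{2}\,|k^{\varphi}(z,w)|^{2(\beta+2)}}{|1-\overline{\varphi(w)}\varphi(z)|^{2(\beta+2)}}\,dA_\sigma(z)\,dA_\sigma(w).
\]
This identity is the two-dimensional change-of-variables formula announced in the introduction, and is the common root of both parts.

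For the operator-norm estimate (part (2)), let $M$ denote the supremum in the statement and insert the pointwise bound $|k^{\varphi}(z,w)|^{2(\beta+2)}\le M^{2(\beta+2)}\,|\varphi'(z)\varphi'(w)|^{2}$ into the previous display. Two successive non-univalent changes of variables $u=\varphi(z)$, $v=\varphi(w)$ — i.e.\ two applications of the area formula with multiplicities — absorb the Jacobian factors $|\varphi'|^{2}\,dA$ at the cost of inserting the generalized Nevanlinna counting function $\mathcal{N}_{\varphi,\sigma}$ in each variable. Dominating $\mathcal{N}_{\varphi,\sigma}(u)$ by the Bergman-type weight $(1-|u|^{2})^{\sigma}$ (a Littlewood-type sub-mean-value inequality, valid in the parameter range $\sigma-1<\beta\le\sigma$) returns an integral of the Balooch--Wu form and hence, by one more application of Theorem 1, a multiple of $\|f\|^{2}_{\mathfrak{D}_a(\mathbb{D})}$. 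Tracking constants gives $\|C_\varphi\|_{\mathrm{operator}}\le C\,M^{\beta+2}$, which is (2).

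For the finiteness of the supremum (part (1)), fix $z_1,z_2\in\mathbb{D}$ and test the operator inequality $\|C_\varphi f\|\le\|C_\varphi\|\cdot\|f\|$ on a suitable two-point family $f_{w_1,w_2}$ built from differences of reproducing kernels of $\mathfrak{D}_a(\mathbb{D})$, with $w_i=\varphi(z_i)$. The norm $\|f_{w_1,w_2}\|^{2}$ is computed in closed form (or bounded via Balooch--Wu), while localizing the Balooch--Wu integral of $C_\varphi f_{w_1,w_2}$ at the pair $(z_1,z_2)$ — where the finite difference $|f_{w_1,w_2}\circ\varphi(z)-f_{w_1,w_2}\circ\varphi(w)|$ is, to leading order, a derivative expression involving $\varphi'(z_i)$ — singles out precisely the quotient $|k^{\varphi}(z_1,z_2)|^{2(\beta+2)}/|\varphi'(z_1)\varphi'(z_2)|^{2}$. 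The resulting bound, uniform in $(z_1,z_2)$, is exactly (1).

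The principal obstacle is the interplay between non-univalence of $\varphi$ and the Bergman weight appearing in the Balooch--Wu expression: the pointwise domination of $\mathcal{N}_{\varphi,\sigma}$ by $(1-|u|^{2})^{\sigma}$ is a genuine Pau--P\'erez-type sub-mean-value estimate, and it is there that the parameter restriction $\sigma-1<\beta\le\sigma$ is sharp. A secondary technical issue in part (1) is to verify that the test-function localization is legitimate, in the sense that the double-integral representation of $\|C_\varphi f_{w_1,w_2}\|^{2}$ actually concentrates near the chosen pair $(z_1,z_2)$ rather than smears across $\mathbb{D}\times\mathbb{D}$; this is typically handled by a Schur-type off-diagonal cut-off combined with the Schwarz--Pick bound $k^{\varphi}(z,z)=(1-|\varphi(z)|^{2})/(1-|z|^{2})\ge|\varphi'(z)|$.
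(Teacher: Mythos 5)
For part (2) your route is essentially the paper's: both arguments run the Balooch--Wu double-integral norm through the two-variable change of variables with the generalized Nevanlinna counting function, introduce $k^{\varphi}$ by multiplying and dividing by $|1-\overline{z_2}z_1|^{2(\beta+2)}$ (equivalently by $|1-\overline{\varphi(z_2)}\varphi(z_1)|^{2(\beta+2)}$), and then invoke the domination of $\mathcal{N}_{\varphi,\sigma}$ by the Bergman weight. One correction of emphasis: that domination is \emph{not} an unconditional ``Littlewood-type sub-mean-value inequality.'' It is precisely the Pau--P\'erez characterization of boundedness of $C_{\varphi}$ on $\mathfrak{D}_a(\mathbb{D})$, and it must be imported from the hypothesis that $C_{\varphi}$ is bounded --- this is exactly how the paper uses it (``by our assumption about the boundedness of the operator, Theorem 3.1 in [Pau]''). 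Presenting it as automatic misplaces where the hypothesis enters, and you should also note that the resulting constant then carries a dependence on the Pau--P\'erez supremum, so the clean form of the bound in (2) requires tracking that dependence explicitly.

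For part (1) you diverge from the paper, and this is where the genuine gap lies. The paper obtains (1) by rewriting the integrand so that the factor $|\varphi'(z_1)\varphi'(z_2)|^{2}/|k^{\varphi}(z_1,z_2)|^{2(\beta+2)}$ appears, pulling its infimum out of the double integral, and identifying the remaining integral with $\|C_{\varphi}f\|^2_{\mathfrak{D}_a}$ via Balooch--Wu; no test functions are used. Your alternative --- testing on differences of reproducing kernels and ``localizing'' the Balooch--Wu integral at the pair $(z_1,z_2)$ --- is only a plan: you give no construction of $f_{w_1,w_2}$, no computation of its norm, and, crucially, no mechanism by which a double integral over $\mathbb{D}\times\mathbb{D}$ concentrates at a single off-diagonal pair. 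The heuristic that the finite difference $|f_{w_1,w_2}\circ\varphi(z)-f_{w_1,w_2}\circ\varphi(w)|$ is ``to leading order a derivative expression involving $\varphi'(z_i)$'' is a Taylor-expansion statement valid only for $z$ near $w$, i.e.\ near the diagonal; the supremum in (1) ranges over \emph{all} pairs $(z_1,z_2)$, including widely separated ones, and for those your argument produces no factor $|\varphi'(z_1)\varphi'(z_2)|$ at all. The Schur-type cut-off you mention does not address this, since the quantity you need to isolate lives exactly in the off-diagonal regime you propose to cut off. As written, part (1) is not proved.
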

The next result can also be seen as an application of the change of variables formula, and is the following:
 \begin{theorem} \textit{Let $\Phi:\mathbb{D}^2\to \mathbb{D}^2$ be a seperated, holomorphic self map of the bidisc. Then, the composition operator $C_{\Phi}:\mathfrak{D}_{\vec{a}}(\mathbb{D}^2)\to\mathfrak{D}_{\vec{a}}(\mathbb{D}^2)$} is bounded if and only if:
 $$\sup_{z_i\in\mathbb{D}}\frac{\mathcal{N}_{\varphi_i,a_i}(z_i)}{(1-|z_i|^2)^{1-2a_i}}<\infty, i=1,2, 0<a_1,a_2<1/2$$
 \end{theorem}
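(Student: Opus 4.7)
The plan is to exploit the product structure of the separated symbol $\Phi(z_1,z_2)=(\varphi_1(z_1),\varphi_2(z_2))$ to reduce the bidisc problem to two coordinate-wise applications of the one-dimensional Pau--Perez theorem for $C_\varphi$ on $\mathfrak{D}_a(\mathbb{D})$. The chain rule under separation gives
$$\partial_{z_1}\partial_{z_2}(f\circ\Phi)(z_1,z_2) = (\partial_1\partial_2 f)(\varphi_1(z_1),\varphi_2(z_2))\,\varphi_1'(z_1)\varphi_2'(z_2),$$
together with the analogous simpler identities $\partial_{z_1}(f\circ\Phi)(z_1,0)=(\partial_1 f)(\varphi_1(z_1),\varphi_2(0))\varphi_1'(z_1)$ and $\partial_{z_2}(f\circ\Phi)(0,z_2)=(\partial_2 f)(\varphi_1(0),\varphi_2(z_2))\varphi_2'(z_2)$. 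So I would split $\|C_\Phi f\|^2_{\mathfrak{D}_{\vec{a}}(\mathbb{D}^2)}$ into the three integrals appearing in the definition of the norm and attack each.

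\textbf{Sufficiency.} For the main (mixed) term, I would invoke Fubini and apply the one-dimensional Littlewood-type change of variables coordinate by coordinate; this is exactly the two-dimensional change of variables formula that Section~2 develops in the separated case. The outcome is
$$\int_{\mathbb{D}^2}|(\partial_1\partial_2 f)(w_1,w_2)|^2\,\mathcal{N}_{\varphi_1,a_1}(w_1)\,\mathcal{N}_{\varphi_2,a_2}(w_2)\,dA(w_1)\,dA(w_2),$$
and the hypothesis $\mathcal{N}_{\varphi_i,a_i}(w_i)\leq M(1-|w_i|^2)^{1-2a_i}$ bounds this by $C\|f\|^2_{\mathfrak{D}_{\vec{a}}(\mathbb{D}^2)}$ directly. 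For a boundary integral such as $\int_{\mathbb{D}}|(\partial_1 f)(\varphi_1(z_1),\varphi_2(0))|^2|\varphi_1'(z_1)|^2\,dA_{a_1}(z_1)$, I would perform the one-dimensional change of variables in $z_1$ to obtain $\int_{\mathbb{D}}|(\partial_1 f)(w,\varphi_2(0))|^2\,dA_{a_1}(w)$, which is the squared $\mathfrak{D}_{a_1}$-seminorm of the slice $f(\cdot,\varphi_2(0))$; a Cauchy--Schwarz argument on the Taylor coefficients $a_{k,l}$ gives $\|f(\cdot,\zeta)\|_{\mathfrak{D}_{a_1}}\leq C(\zeta,a_2)\|f\|_{\mathfrak{D}_{\vec{a}}}$ for $\zeta\in\mathbb{D}$, with the constant finite at any fixed interior point. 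Finally, bounded point evaluation handles $|f(\varphi_1(0),\varphi_2(0))|^2$.

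\textbf{Necessity and main obstacle.} For the necessary direction, I would test $C_\Phi$ against the tensor functions $f(z_1,z_2)=g(z_1)$ with $g\in\mathfrak{D}_{a_1}(\mathbb{D})$: all cross-derivative terms in the bidisc norm vanish, so $\|f\|_{\mathfrak{D}_{\vec{a}}(\mathbb{D}^2)}\asymp \|g\|_{\mathfrak{D}_{a_1}(\mathbb{D})}$ and $\|C_\Phi f\|_{\mathfrak{D}_{\vec{a}}(\mathbb{D}^2)}\asymp \|C_{\varphi_1}g\|_{\mathfrak{D}_{a_1}(\mathbb{D})}$; thus boundedness of $C_\Phi$ forces boundedness of $C_{\varphi_1}$ on $\mathfrak{D}_{a_1}(\mathbb{D})$, and the Pau--Perez theorem yields the first supremum bound. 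Swapping roles produces the bound for $\varphi_2$. The most delicate point in the whole argument is the slice estimate $\|f(\cdot,\varphi_2(0))\|_{\mathfrak{D}_{a_1}(\mathbb{D})}\lesssim\|f\|_{\mathfrak{D}_{\vec{a}}(\mathbb{D}^2)}$ in the boundary terms -- the constant there blows up as the slicing point approaches $\partial\mathbb{D}$, and one must carefully track that for an interior point $\varphi_2(0)\in\mathbb{D}$ it remains finite so that the resulting operator norm bound, while depending on $\Phi$, is genuine.
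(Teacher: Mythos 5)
Your proposal is correct, and the sufficiency half follows essentially the same path as the paper: the coordinate-wise change of variables (the paper's two-dimensional Lemma for separated symbols), the pointwise bound $\mathcal{N}_{\varphi_i,a_i}(w_i)\leq C_i(1-|w_i|^2)^{1-2a_i}$ applied to each of the three integrals in the norm, and bounded point evaluation for the constant term. You are in fact more careful than the paper on one point: the paper's auxiliary lemma writes the first two terms of $\|C_\Phi f\|^2$ as integrals of $|\partial_{z_1}f(z_1,0)|^2$ and $|\partial_{z_2}f(0,z_2)|^2$ against the counting functions, whereas the chain rule actually produces the slices at $\varphi_2(0)$ and $\varphi_1(0)$; your Cauchy--Schwarz estimate on the Taylor coefficients, giving $\|f(\cdot,\zeta)\|_{\mathfrak{D}_{a_1}}\leq C(\zeta,a_2)\|f\|_{\mathfrak{D}_{\vec a}}$ at an interior point $\zeta=\varphi_2(0)$, is exactly what is needed to close that gap, and it does work since $\sum_l (l+1)^{-2a_2}|\zeta|^{2l}<\infty$ for $|\zeta|<1$.

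Where you genuinely diverge is the necessity direction. The paper re-runs the Pau--Perez argument inside the bidisc: it builds an explicit two-variable test function $f_{\omega_1,\omega_2}$ (a sum of two one-variable kernels plus their product), computes $\|C_\Phi f_{\omega_1,\omega_2}\|^2$ via the change of variables, and then extracts the pointwise bound on $\mathcal{N}_{\varphi_i,a_i}$ from Aleman's sub-mean-value inequality, treating $|\omega_i|>1/2$ and $|\omega_i|\leq 1/2$ separately. You instead test $C_\Phi$ on tensor functions $f(z_1,z_2)=g(z_i)$, observe that the bidisc norm collapses to the one-variable $\mathfrak{D}_{a_i}(\mathbb{D})$ norm (all cross terms vanish), conclude that each $C_{\varphi_i}$ is bounded on $\mathfrak{D}_{a_i}(\mathbb{D})$, and then quote the Pau--Perez characterization, which for $0<1-2a_i<1$ is precisely the supremum condition in the statement. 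Your route is shorter and cleanly modular, at the price of importing the one-dimensional theorem as a black box; the paper's route is self-contained modulo Aleman's inequality and makes the test-function mechanism explicit. Both arguments are valid, and since the paper itself already invokes Theorem 3.1 of Pau--Perez elsewhere, your reduction is entirely in the spirit of the text.
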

 First we will prove Theorem 3.2. In order to be able to showcase the proof, we require only the change-of-variable formula that we mentioned in the introductory section of the paper.
 \begin{lemma} \textit{Let $g:\mathbb{D}^2\to \mathbb{C}$ be a positive integrable function, and $\varphi_1,\varphi_2:\mathbb{D}\to\mathbb{D}$ holomorphic self maps of the disc. Then:}
 \begin{multline}
 \int_{\mathbb{D}^2}g(\varphi_{1}(z_1),\varphi_{2}(z_2))|\varphi_1'|^2|\varphi_2'|^2dA_{a_1}(z_1)dA_{a_2}(z_2)=\\=\int_{\mathbb{D}^2}g(w_1,w_2)\mathcal{N}_{\varphi_1,a_1}(w_1)\mathcal{N}_{\varphi_{2},a_2}(w_2)dV(w_1,w_2)
 \end{multline}
 \end{lemma}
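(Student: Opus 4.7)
The plan is to deduce this two-dimensional identity from the classical one-dimensional generalised Nevanlinna-Stanton change-of-variables formula by applying it in each variable separately and using Tonelli in between. The separated nature of the symbol $\Phi=(\varphi_1(z_1),\varphi_2(z_2))$ and the product structure of both the weighted measure $dA_{a_1}\,dA_{a_2}$ and the Jacobian factor $|\varphi_1'(z_1)|^2|\varphi_2'(z_2)|^2$ are precisely what permit this reduction.

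First I would use Tonelli's theorem (valid since $g$ is nonnegative and all the weights in play are nonnegative measurable) to rewrite the left-hand side as an iterated integral, integrating in $z_1$ first:
\begin{equation*}
I=\int_{\mathbb{D}}|\varphi_2'(z_2)|^2\left(\int_{\mathbb{D}} g(\varphi_1(z_1),\varphi_2(z_2))|\varphi_1'(z_1)|^2\,dA_{a_1}(z_1)\right)dA_{a_2}(z_2),
\end{equation*}
where $I$ denotes the left-hand side of the claimed identity. For fixed $z_2$, set $w_2:=\varphi_2(z_2)$; the map $h_{w_2}:=g(\,\cdot\,,w_2)$ is nonnegative and measurable on $\mathbb{D}$, so the one-dimensional identity
\begin{equation*}
\int_{\mathbb{D}} h(\varphi(z))|\varphi'(z)|^2\,dA_a(z)=\int_{\mathbb{D}} h(w)\,\mathcal{N}_{\varphi,a}(w)\,dA(w),
\end{equation*}
applied with $\varphi=\varphi_1$, $a=a_1$ and $h=h_{w_2}$, converts the inner integral into $\int_{\mathbb{D}} g(w_1,w_2)\mathcal{N}_{\varphi_1,a_1}(w_1)\,dA(w_1)$.

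Substituting this back and invoking Tonelli a second time to bring the outer $z_2$-integral inside (holding $w_1$ fixed), the same one-dimensional identity, now applied to $\varphi_2$, $a_2$ and $h=g(w_1,\,\cdot\,)$, produces the factor $\mathcal{N}_{\varphi_2,a_2}(w_2)$ and replaces the $z_2$-integration by an integration over $w_2\in\mathbb{D}$. Reassembling and recognising $dA(w_1)\,dA(w_2)=dV(w_1,w_2)$ gives the right-hand side. The one real step to justify is the one-dimensional nonunivalent change of variables, which is standard: one verifies it for indicator functions of measurable sets by summing $(\log(1/|w|))^{1-2a}$ over preimages, extends to simple functions by linearity, and passes to general nonnegative measurable $g$ by monotone convergence. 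With that identity accepted as a black box, the two-dimensional statement follows from two applications sandwiched around a pair of routine Tonelli interchanges, so I do not anticipate any further technical obstacle.
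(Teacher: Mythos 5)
Your proposal is correct and follows essentially the same route as the paper: fix one variable, apply the one-dimensional non-univalent change-of-variables formula in the other, and use Fubini/Tonelli to repeat in the second variable (the paper dresses up the interchange with a decomposition into polar rectangles, while you invoke Tonelli directly for nonnegative integrands, and you make the second application of the one-dimensional formula explicit where the paper leaves it implicit). No substantive difference in method.
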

 \begin{proof} Fix $z_2\in \mathbb{D}.$ Then the slice function $g_{z_2}(z_1)$ is integrable and positive in $\mathbb{D},$ hence the change of variable formula holds for the disc and for $w_1=\varphi(z_1).$ That is:
$$\int_{\mathbb{D}}g(\varphi(z_1),\varphi_2(z_2))|\varphi'_1|^2dA_{a_1}(z)=\int_{\mathbb{D}}g(w_1,\varphi_2(z_2))\mathcal{N}_{\varphi_1,a_1}(w_1)dA(w_1).$$
Next, we multiply both sides of the above equality by $$(1-|z_2|^2)^{1-2a_2}|\phi'_2(z_2)|^2$$ and we integrate both sides into polar rectangles $R_k.$
We obtain: 
\begin{multline}
\int_{R_k}(1-|z_2|^2)^{1-2a_2}|\phi'_2|^2\left(\int_{\mathbb{D}}g(\varphi(z_1),\varphi_2(z_2))|\varphi'_1|^2dA_{a_1}(z_1)\right)dA(z_2)=\\ \int_{\mathbb{D}}\mathcal{X}_{R_k}(z_2)(1-|z_2|^2)^{1-2a_2}|\phi'_2|^2\left(\int_{\mathbb{D}}g(w_1,\varphi_2(z_2))\mathcal{N}_{\varphi_1,a_1}(w_1)dA(w_1)\right)dA(z_2),
\end{multline}
where $\mathcal{X}_{R_k}$ denotes the characteristic function of each polar rectangle $R_k.$
Summing both sides in terms of $k$ and applying Fubini's Theorem, we obtain the result.
 \end{proof}
At this point we are ready to proceed with the proof of Theorem 3.2.
\begin{proof}
Pick $\beta,\sigma$ as in the statement of the Theorem. Set $$g(z_1,z_2)=\frac{|f(z_1)-f(z_2)|^2}{|1-\overline{z_2}z_1|^{2\beta+2}},z_1,z_2\in\mathbb{D},$$
which is a positive integrable function on the bi-disc. We receive:
\begin{multline}
\int_{\mathbb{D}^2}\frac{|f\circ \varphi(z_1)-f\circ \varphi(z_2)|^2}{|1-\overline{\varphi(z_2)}\varphi(z_1)|^{2(\beta+2)}}|\varphi'(z_1)|^2|\varphi'(z_2)|^2dV_{\sigma}(z_1,z_2)=\\ =\int_{\mathbb{D}^2}\frac{|f(w_1)-f(w_2)|^2}{|1-\overline{w_2}w_1|^{2(\beta+2)}}\mathcal{N}_{\varphi,\sigma}(w_1)\mathcal{N}_{\varphi,\sigma}(w_2)dV(w_1,w_2)
\leq C||f||^2_{\mathfrak{D}_{p}(\mathbb{D})}
\end{multline}
with the last inequality obtained by our assumption about the boundedness of the operator (Theorem 3.1. in \cite{Pau}) along with Theorem 3.1.
Now we multiply and divide with $|1-\overline{z_2}z_1|^{2(\beta+2)}$ both the numerator and the denominator inside the left hand side integral and we obtain:
$$\int_{\mathbb{D}^2}\frac{|f\circ \varphi(z_1)-f\circ \varphi(z_2)|^2}{|1-\overline{z_2}z_1|^{2(\beta+2)}}\frac{|\varphi'(z_1)|^2|\varphi'(z_2)|^2}{|k^{\varphi}(z_1,z_2)|^{2(\beta+2)}}dV_{\sigma}(z_1,z_2)\leq C ||f||^2_{\mathfrak{D}_{p}(\mathbb{D})}.$$
Taking the infinum out of the integral, yields:
$$\inf_{z_1,z_2\in\mathbb{D}}\frac{|\varphi'(z_1)|^2|\varphi'(z_2)|^2}{|k^{\varphi}(z_1,z_2)|^{2(\beta+2)}}\int_{\mathbb{D}^2}\frac{|f\circ \varphi(z_1)-f\circ \varphi(z_2)|^2}{|1-\overline{z_2}z_1|^{2(\beta+2)}}dV_{\sigma}(z_1,z_2)\leq C ||f||^2_{\mathfrak{D}_{p}(\mathbb{D})}$$
which, by Theorem 3.1. and the properties of supremum, is equivalent to:
$$||C_{\varphi}f||^2_{\mathfrak{D}_{p}(\mathbb{D})}\leq C\sup_{z_1,z_2\in\mathbb{D}}\frac{|k^{\varphi}(z_1,z_2)|^{2(\beta+2)}}{|\varphi'(z_1)|^2|\varphi'(z_2)|^2} ||f||^2_{\mathfrak{D}_{p}(\mathbb{D})}.$$
Because of our assumption that the composition operator is bounded for all holomorphic functions $f,$ the supremum has to be bounded. This provides us with the result. For the second part, we just observe that
$$||C_{\varphi}||_{\mathrm{operator}}=\inf\{c\ge0: ||C_\varphi||_{\mathfrak{D}_{p}}\leq c||f||_{\mathfrak{D}_{p}}\}\leq C \left(\sup_{z_1,z_2\in\mathbb{D}}\frac{|k^{\varphi}(z_1,z_2)|}{|\varphi'(z_1)\cdot \varphi'(z_2)|^{\frac{1}{\beta+2}}}\right)^{\beta+2}$$
which comes straightforwardly by the inequality above.
\end{proof}
Before we are able to prove the second result, we have to give an auxillary lemma . We state the following lemma for the composition $f\circ \Phi,$ where $\Phi=(\varphi_1(z_1),\varphi_2(z_2)),z_1,z_2\in\mathbb{D},$ is a \textit{separated symbol.}
\begin{lemma}
\textit{Let $\Phi:\mathbb{D}^2\to \mathbb{D}^2$ be a separated symbol. Then the following holds:}
\begin{multline}
 ||C_{\Phi}f||^2_{\mathfrak{D}_{\vec{a}}(\mathbb{D}^2)}=\int_{\mathbb{D}}|\partial_{z_1}f(z_1,0)|^2\mathcal{N}_{\varphi_1,a_1}(z_1)dA(z_1)+\\\int_{\mathbb{D}}|\partial_{z_2}f(0,z_2)|^2\mathcal{N}_{\varphi_2,a_2}(z_2)dA(z_2)+\\ \int_{\mathbb{D}^2}|\partial_{z_2}\partial_{z_1}f(z_1,z_2)|^2\mathcal{N}_{\varphi_1,a_1}(z_1)\mathcal{N}_{\varphi_2,a_2}(z_2)dA(z_1)dA(z_2).
 \end{multline}
\end{lemma}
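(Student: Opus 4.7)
The plan is to combine the chain rule, which decouples completely thanks to the separated structure of $\Phi$, with the change-of-variables formula of Lemma 2.1 (and its one-dimensional specialization) to handle each term of the semi-norm $\mathfrak{D}_{\vec{a}}$ separately. Since $\Phi(z_1,z_2)=(\varphi_1(z_1),\varphi_2(z_2))$, we have the clean factorizations
$$\partial_{z_1}(f\circ\Phi)(z_1,z_2)=\varphi_1'(z_1)\,(\partial_1 f)(\varphi_1(z_1),\varphi_2(z_2)),$$
$$\partial_{z_2}(f\circ\Phi)(z_1,z_2)=\varphi_2'(z_2)\,(\partial_2 f)(\varphi_1(z_1),\varphi_2(z_2)),$$
$$\partial_{z_2}\partial_{z_1}(f\circ\Phi)(z_1,z_2)=\varphi_1'(z_1)\varphi_2'(z_2)\,(\partial_2\partial_1 f)(\varphi_1(z_1),\varphi_2(z_2)),$$
so that each of the three integrals appearing in (1.1) applied to $C_\Phi f$ carries precisely the Jacobian weight $|\varphi_i'|^2$ (or $|\varphi_1'|^2|\varphi_2'|^2$) needed to invoke a Nevanlinna change of variables.

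For the double integral (the cross-derivative term), Lemma 2.1 applies verbatim with the positive integrable choice $g(w_1,w_2)=|(\partial_2\partial_1 f)(w_1,w_2)|^2$, producing the weight $\mathcal{N}_{\varphi_1,a_1}(w_1)\mathcal{N}_{\varphi_2,a_2}(w_2)$ as required. For the two single-variable slice integrals, I would apply the one-dimensional version of the same identity, which is exactly the argument of Lemma 2.1 without the second variable present (or, equivalently, Lemma 2.1 applied with $g$ depending on only one coordinate after integrating out the other over a full polar rectangle exhaustion). Concretely, for the slice at $z_2=0$ this yields
$$\int_{\mathbb{D}}|\varphi_1'(z_1)|^2|(\partial_1 f)(\varphi_1(z_1),\varphi_2(0))|^2\,dA_{a_1}(z_1)=\int_{\mathbb{D}}|(\partial_1 f)(w_1,\varphi_2(0))|^2\,\mathcal{N}_{\varphi_1,a_1}(w_1)\,dA(w_1),$$
and symmetrically for the slice at $z_1=0$. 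Summing the three contributions and adjoining the point evaluation $|f(\varphi_1(0),\varphi_2(0))|^2$ gives the stated identity.

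The main obstacle is not analytic but notational: one must decide how to read the base-point $0$ appearing in the slice terms of the stated lemma. My interpretation above produces $\varphi_2(0)$ and $\varphi_1(0)$ in place of the zeros on the right-hand side, so the identity as written corresponds to the case $\Phi(0,0)=(0,0)$ (or should be read with the base point of $f$ shifted to $\Phi(0,0)$). Once that convention is fixed, the rest of the argument is mechanical: Lemma 2.1 does all of the work, the separated structure of $\Phi$ removes any cross-terms in the chain rule, and no estimates beyond those already established in Lemma 2.1 are required.
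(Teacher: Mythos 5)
Your argument is correct and is essentially identical to the paper's own (very terse) proof: the chain rule factorizes cleanly for separated symbols, the one-dimensional Nevanlinna change of variables handles the two slice terms, and Lemma 2.1 handles the mixed-derivative term. Your remark about the base point is a genuine imprecision in the statement that the paper's proof silently passes over --- the slice terms really produce $(\partial_1 f)(w_1,\varphi_2(0))$ and $(\partial_2 f)(\varphi_1(0),w_2)$, so the identity as written holds only under the convention $\Phi(0,0)=(0,0)$ (or with the base point shifted as you describe), and likewise the constant term $|f(\Phi(0,0))|^2$ is omitted from the displayed right-hand side.
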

\begin{proof}  We apply the one-dimensional change of variables formula for the first two integrals defining the norm, and for the third integral we use the two-dimensional version of the change of variables we proved before.  For the 1st term in the integral norm of the composition operator  we just apply the well known change-of-variables formula for the function $g(z_1)=|\partial_{z_1}f(z_1,0)|^2$, and similarly for the second term, we apply it for the function $g(z_2)=|\partial_{z_2}f(0,z_2)|^2.$ For the third term we apply Lemma 3.2 for the function $g(z_1,z_2)=|\partial_{z_2}\partial_{z_1}f(z_1,z_2)|^2.$
\end{proof}
At this point we are ready for the proof of Theorem 3.3.
\begin{proof} (Proof of Theorem 3.3) For the sufficiency part, assume that there exist two positive constants $C_1,C_2$ such that:
$$\mathcal{N}_{\varphi_1,a_1}(z_1)\leq C_1(1-|z_1|^2)^{1-2a_1}$$ and
$$\mathcal{N}_{\varphi_2,a_2}(z_2)\leq C_2(1-|z_2|^2)^{1-2a_2}$$
then the following three inequalities hold:
$$\int_{\mathbb{D}}|\partial_{z_1}f(z_1,0)|^2\mathcal{N}_{\varphi_1,a_1}(z_1)dA(z_1)\leq C_1\int_{\mathbb{D}}|\partial_{z_1}f(z_1,0)|^2dA_{a_1}(z_1),$$
$$\int_{\mathbb{D}}|\partial_{z_2}f(0,z_2)|^2\mathcal{N}_{\varphi_2,a_2}(z_2)dA(z_2)\leq C_2\int_{\mathbb{D}}|\partial_{z_2}f(0,z_2)|^2dA_{a_2}(z_2),$$
and
\begin{multline}\int_{\mathbb{D}^2}|\partial_{z_2}\partial z_1f(z_1,z_2)|^2\mathcal{N}_{\varphi_1,a_1}(z_1)\mathcal{N}_{\varphi_2,a_2}(z_2)dA(z_1)dA(z_2)\\\leq C_1C_2\int_{\mathbb{D}^2}|\partial_{z_1}\partial_{z_2}f(z_1,z_2)|^2dA_{a_1}(z_1)dA_{a_2}(z_2).
\end{multline}
Moreover, by the boundedness of the point evaluation functional on $\mathfrak{D}_{\vec{a}}(\mathbb{D}^2)$ we obtain a positive constant $C_3$ such that $$|f(\varphi_1(0),\varphi_2(0))|\leq C_3|f(0,0)|.$$
Hence, we receive:
$$||C_{\Phi}f||_{\mathfrak{D}_{\vec{a}}(\mathbb{D}^2)}\leq C||f||_{\mathfrak{D}_{\vec{a}}(\mathbb{D}^2)} $$
and this direction of the proof is finished.
For the necessity part we follow the proof of Theorem 3.1. in \cite{Pau}. We will write down the proof for convenience of the reader. We assume now that there exists a constant $C>0$ such that:
$$||C_{\Phi}f||_{\mathfrak{D}_{\vec{a}}(\mathbb{D}^2)}\leq C||f||_{\mathfrak{D}_{\vec{a}}(\mathbb{D}^2)},$$
for all $f\in\mathfrak{D}_{\vec{a}}(\mathbb{D}^2).$
We consider the following test function:
\begin{multline}
f_{\omega_1,\omega_2}(z_1,z_2)=(1-|\omega_1|^2)^{1+\frac{1-2a_1}{2}}\int_{0}^{z_1}\frac{d\zeta_1}{(1-\overline{\omega_1}\zeta_1)^{3-2a_1}}\\
+(1-|\omega_2|^2)^{1+\frac{1-2a_2}{2}}\int_{0}^{z_2}\frac{d\zeta_2}{(1-\overline{\omega_2}\zeta_2)^{3-2a_2}}\\
+(1-|\omega_1|^2)^{1+\frac{1-2a_1}{2}}(1-|\omega_2|^2)^{1+\frac{1-2a_2}{2}}\int_{0}^{z_1}\int_{0}^{z_2}\frac{d\zeta_1d\zeta_2}{(1-\overline{\omega_1}\zeta_1)^{3-2a_1}(1-\overline{\omega_2}\zeta_2)^{3-2a_2}},
\end{multline}
where $\omega_1,\omega_2 \in \mathbb{D}.$
By calculating the partial derivatives:
\begin{multline} ||C_{\Phi}f_{\omega_1,\omega_2}||^2_{\mathfrak{D}_{\vec{a}}(\mathbb{D}^2)}=\int_{\mathbb{D}}\frac{(1-|\omega_1|^2)^{3-2a_1}}{|1-\overline{\omega_1}z_1|^{6-4a_1}}\mathcal{N}_{\varphi_1,a_1}(z_1)dA(z_1)+\\
\int_{\mathbb{D}}\frac{(1-|\omega_2|^2)^{3-2a_2}}{|1-\overline{\omega_2}z_2|^{6-4a_2}}\mathcal{N}_{\varphi_2,a_2}(z_2)dA(z_2)+\\
\int_{\mathbb{D}}\int_{\mathbb{D}}\frac{(1-|\omega_1|^2)^{3-2a_1}}{|1-\overline{\omega_1}z_1|^{6-4a_1}}\frac{(1-|\omega_2|^2)^{3-2a_2}}{|1-\overline{\omega_2}z_2|^{6-4a_2}}\mathcal{N}_{\varphi_1,a_1}(z_1)\mathcal{N}_{\varphi_2,a_2}(z_2)dA(z_1)dA(z_2)
\end{multline}
Now, for $\omega_1,\omega_2\in\mathbb{D}$ with $|\omega_i|>1/2,i=1,2$ let
$$D(\omega_i)=\{z_i\in\mathbb{D}:|z_i-\omega_i|<\frac{1}{2}(1-|\omega_i|^2)\}.$$
By Aleman's inequality that one can find in \cite{Aleman} and by the well known fact that the quantities $|1-\overline{\omega_i}z_i|$ and $(1-|\omega_i|^2)$ are comparable for every $z_i\in D(\omega_i),i=1,2,$ we get the inequalities:
\begin{align}
  \mathcal{N}_{\varphi_i,a_i}(\omega_i)&\leq \frac{4}{(1-|\omega_i|^2)}\int_{D(\omega_i)}\mathcal{N}_{\varphi_i,a_i}(z_i)dA(z_i)&\\
 & \leq C_i\int_{\mathbb{D}}\frac{(1-|\omega_i|^2)^{3-2a_i}}{|1-\overline{\omega_i}z_i|^{6-4a_i}}\mathcal{N}_{\varphi_i,a_i}(z_i)dA(z_i)&\\
 & \leq C_i(1-|\omega_i|^2)^{1-2a_i},
\end{align}
hence, $$\sup_{|\omega_i|>1/2}\frac{\mathcal{N}_{\varphi_i,a_i}(\omega_i)}{(1-|\omega_i|^2)^{1-2a_i}}<\infty.$$
By the proof of Aleman's inequality, it is assured that there exist two subharmonic functions $u_{a_i},i=1,2$ such that $\mathcal{N}_{\varphi_i,a_i}\leq u_{a_i}.$ By boundedness of upper semi-continuous functions on compact sets, we obtain
$$\sup_{|\omega_i|\leq 1/2}\frac{\mathcal{N}_{\varphi_{i},a_i}(\omega_i)}{(1-|\omega_i|^2)^{1-2a_i}}\leq 2^{1-2a_i}\sup_{|\omega_i|\leq 1/2}u_{a_i}(\omega_i),i=1,2$$ and we are done with this direction of the proof as well.
\end{proof}
\begin{remark} The previous result can be extended in polydisc setting. Consider $\Phi:\mathbb{D}^n\to\mathbb{D}^n$ which is seperated, in the sense that $\Phi(z_1,...,z_n)=(\varphi_1(z_1),...,\varphi_n(z_n)).$ Then, $C_{\Phi}$ is bounded in the anisotropic Dirichlet space if and only if the condition of Theorem 2.1. is met for $i=1,...,n.$
\end{remark}
\section{Composition operators between the Dirichlet and the Bergman space on the bidisc .}
In this section we will consider the problem of the boundedness of the composition operator $C_{\Phi}:A^2_{\beta}(\mathbb{D}^2)\to \mathfrak{D}(\mathbb{D}^2)$. We have already stated that for the case of the Bergman space and the Hardy spaces of the bidisc, a significant ammount of work has been done in \cite{Bayart1} \cite{Bayart2} and \cite{Kosinski}. The motivation that drove us to consider the following problem is simple. The case of the boundedness of the composition operator in the Dirichlet space of the polydisc has insurmountable amount of calculations on the derivatives, and makes the pull-back measure approach quite difficult to handle. The problem of studying the boundedness of the composition operator $C_{\Phi}:A^2_{\beta}(\mathbb{D}^2)\to \mathfrak{D}(\mathbb{D}^2)$ seems to give more initial results, due to the existence of theorems such as those that we will see in the next two subsections. In the  first subsection we will derive a only sufficient condition which works for seperated symbols, while on the last subsection we will give a necessary and sufficient condition for all holomorphic symbols $\Phi:\mathbb{D}^2\to\mathbb{D}^2.$
 
\subsection{A sufficient condition.}
Our main tool for this section will be the Carleson measures for the Dirichlet space on the bidisc. We denote by $I\times J$ the product of two arcs in the torus $\mathbb{T}.$ Let $z=(z_1,z_2)\in\mathbb{D}^2,$ $\zeta=(\zeta_1,\zeta_2)\in\mathbb{T}^2$ and $\delta=(\delta_1,\delta_2)\in (0,1)^2.$ A Carleson box on the bidisc is the following set: $S(I\times J)=\{z\in\mathbb{D}^2:|z_i-\zeta_i|<\delta_i\},i=1,2.$ A positive Borel measure in $\mathbb{D}^2$ is called \textit{Carleson measure} on $\mathfrak{D}(\mathbb{D}^2)$ if the \textit{Carleson embedding} is a bounded operator, that is, if there exists a positive constant $C(\mu)>0$ such that $$\int_{\overline{\mathbb{D}^2}}|f|^2d\mu \leq C(\mu)||f||_{\mathfrak{D}(\mathbb{D}^2)}.$$ As we will observe, it is not easy to characterise the measures that satisfy the previous inequality. Our main target in this section is to provide a sufficient one-box condition for Carleson measures of the Dirichlet space on the bidisc which is inspired by the techniques of \cite{Fallah}. We will use this condition to obtain a one-box volume condition for the boundedness of the composition operator between the Bergman and the Dirichlet space. For what follows, $K_{w}(z)$ will denote the reproducing kernel of the Dirichlet spaces on the bidisc, that is:
$$K_{w}(z)=\left(C_1+\log\frac{2}{1-\overline{w_1}z_1}\right)\left(C_2+\log\frac{2}{1-\overline{w_2}z_2}\right)$$
for $z=(z_1,z_2), w=(w_1,w_2)\in\mathbb{D}^2.$ The main goal of this section is to prove 
\begin{theorem} \textit{Let $\Phi:\mathbb{D}^2\to\mathbb{D}^2$ be a holomorphic self map of the bidisc. Assume that there exist an increasing function $\psi$ with :
$$\int_{0}^{2\pi}\int_{0}^{2\pi}\frac{\psi(xy)}{xy}dxdy<\infty,$$
and $C>0$ such that:}
$$V_{\beta}\left(\Phi^{-1}(S(I\times J)\right)\leq C\psi(|I\times J|)$$
\textit{Then, the composition operator $C_{\Phi}:\mathfrak{D}(\mathbb{D}^2)\to A_{\beta}^2(\mathbb{D}^2)$ is bounded.}
\end{theorem}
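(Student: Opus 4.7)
The plan splits naturally into two parts: a change-of-variables reduction of the operator boundedness to a Carleson-measure estimate, and the proof of the corresponding Carleson-embedding inequality on $\mathfrak{D}(\mathbb{D}^2)$.

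First, for any $f \in \mathfrak{D}(\mathbb{D}^2)$, a direct change of variables yields
$$\|C_\Phi f\|_{A^2_\beta(\mathbb{D}^2)}^2 \;=\; \int_{\mathbb{D}^2} |f|^2\, d\mu_\Phi, \qquad \mu_\Phi := V_\beta \circ \Phi^{-1},$$
and the hypothesis $V_\beta(\Phi^{-1}(S(I \times J))) \le C\psi(|I \times J|)$ is, literally, the one-box bound $\mu_\Phi(S(I \times J)) \le C\psi(|I \times J|)$ for the pull-back measure. Hence the theorem reduces to the general statement: any positive Borel measure $\mu$ on $\mathbb{D}^2$ that satisfies this one-box condition, with $\psi$ as in the hypothesis, embeds $\mathfrak{D}(\mathbb{D}^2)$ continuously into $L^2(\mu)$.

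To prove this embedding I would follow the approach of \cite{Fallah}, transferred to the bi-disc. Decompose $\mathbb{D}^2$ into a Whitney-type family of rectangles $\{R_{n_1,n_2}\}$ sitting over dyadic products of arcs $I_{n_1}\times J_{n_2}$ of sizes $2^{-n_1}$ and $2^{-n_2}$, and control $\int|f|^2\,d\mu$ by summing $\mu(R_{n_1,n_2})$ against a suitable average of $|f|^2$ on $R_{n_1,n_2}$. The one-box hypothesis gives $\mu(R_{n_1,n_2}) \le \psi(2^{-n_1-n_2})$, and the substitution $x=2^{-n_1}$, $y=2^{-n_2}$ turns the resulting dyadic series into precisely $\int_0^{2\pi}\!\int_0^{2\pi} \psi(xy)/(xy)\,dx\,dy$, which is finite by assumption.

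The delicate step is the estimate of $|f|^2$ on each Whitney rectangle. The naive reproducing-kernel bound $|f(z)|^2 \le K_z(z)\|f\|^2 \asymp n_1 n_2 \|f\|^2$ is too wasteful, since the factor $n_1 n_2$ cannot be absorbed by the assumed integrability on $\psi$. I expect to need a localised version of the reproducing formula, together with the equivalent norm representation $\|f\|^2 \asymp |f(0,0)|^2 + \mathfrak{D}_{\vec a}(f)$ recalled in Section~1, plus a bi-parameter Littlewood--Paley splitting of $f$ whose pieces have spectrum concentrated on dyadic scales $(n_1,n_2)$. This is precisely where the form $\psi(xy)/(xy)$ in the hypothesis is used; any weaker single-variable integrability would fail because of the genuinely bi-tree nature of $\mathfrak{D}(\mathbb{D}^2)$-Carleson measures established in \cite{Arcozzi2}. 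Controlling these scale interactions, together with the contributions of the two lower-dimensional boundary terms in the norm, is where the main technical obstacle lies; once that is settled the remainder is bookkeeping and a density argument.
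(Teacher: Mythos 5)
Your first step --- pulling back $V_\beta$ under $\Phi$ and reducing the theorem to the statement that the one-box condition forces $\mu_\Phi=V_\beta\circ\Phi^{-1}$ to be a Carleson measure for $\mathfrak{D}(\mathbb{D}^2)$ --- is exactly what the paper does. The gap is in the second half: you never actually prove the embedding. You propose a Whitney decomposition of $\mathbb{D}^2$ and then defer the crucial estimate of $|f|^2$ on each rectangle to an unspecified ``localised reproducing formula'' and a ``bi-parameter Littlewood--Paley splitting,'' explicitly flagging this as the main technical obstacle and leaving it unresolved. That is precisely the hard part of the problem: as you yourself note, single-box conditions do not characterise Carleson measures on the bidisc Dirichlet space (the genuine characterisation in \cite{Arcozzi2} involves capacities of arbitrary finite unions of boxes), so a direct ``sum $\mu(R_{n_1,n_2})$ against averages of $|f|^2$'' scheme has no reason to close without substantial new input. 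As written, the proposal is a plan with a hole at its center, not a proof.

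The idea you are missing is that one should never estimate $|f|^2$ at all. The bilinear characterisation of Carleson measures from \cite{Arcozzi2} reduces the embedding to bounding the form $\int_{\mathbb{D}^2}\int_{\mathbb{D}^2}\mathfrak{R}K_w(z)|g(z)||g(w)|\,d\mu(z)\,d\mu(w)$ on $L^2(\mu)$, and by the symmetry and positivity of the kernel together with $|g(z)||g(w)|\le\frac{1}{2}\bigl(|g(z)|^2+|g(w)|^2\bigr)$ this follows from the single testing condition $\sup_{w\in\mathbb{D}^2}\int_{\mathbb{D}^2}\mathfrak{R}K_w(z)\,d\mu(z)<\infty$. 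That supremum is then bounded directly from the one-box hypothesis by a layer-cake argument on the logarithmic kernel: writing $\log\frac{2}{|1-\overline{w_i}z_i|}=\int_{|1-\overline{w_i}z_i|}^{2}\frac{dt_i}{t_i}$, applying Fubini, and observing that the sublevel set $\{|1-\overline{w_1}z_1|\le t_1\}\times\{|1-\overline{w_2}z_2|\le t_2\}$ sits inside a Carleson box of area comparable to $t_1t_2$, one gets $\int\mathfrak{R}K_w\,d\mu\lesssim\int_0^8\int_0^8\frac{\psi(t_1t_2)}{t_1t_2}\,dt_1\,dt_2<\infty$. This is the two-dimensional version of the one-box theorem of \cite{Fallah}, and it is exactly where the integrability hypothesis on $\psi(xy)/(xy)$ enters; no decomposition of $f$, no Littlewood--Paley theory, and no density argument are needed.
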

For the proof of the above sufficient condition, we will establish the following lemmata:
\begin{lemma} \textit{Let $\mu$ a positive Borel measure on $\mathbb{D}^2$. Then, $\mu $ is a Carleson measure on $\mathfrak{D}(\mathbb{D}^2)$, if and only if}
$$\int_{\mathbb{D}^2} \int_{\mathbb{D}^2}\mathfrak{R}K_w(z)|g(z)||g(w)|d\mu(z)d\mu(w)\leq C||g||_{L^2(\mathbb{D}^2,d\mu)},$$
\textit{for any $g\in L^2(\mathbb{D}^2,d\mu). $}
\end{lemma}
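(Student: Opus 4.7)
The plan is to use the classical $T T^{*}$ argument for Carleson embeddings into reproducing-kernel Hilbert spaces, with a small twist needed because $K_w(z)$ is complex-valued. Write $I_{\mu}:\mathfrak{D}(\mathbb{D}^{2})\to L^{2}(\mathbb{D}^{2},d\mu)$ for the inclusion; by definition $\mu$ is a Carleson measure exactly when $I_{\mu}$ is bounded, and the $C^{*}$-identity $\|I_{\mu}\|^{2}=\|I_{\mu}I_{\mu}^{*}\|$ reduces this to the boundedness of $I_{\mu}I_{\mu}^{*}$ on $L^{2}(d\mu)$. A direct computation using the reproducing property $f(w)=\langle f,K_{w}\rangle_{\mathfrak{D}}$ yields $(I_{\mu}^{*}g)(w)=\int g(z)\,\overline{K_{w}(z)}\,d\mu(z)$, so $I_{\mu}I_{\mu}^{*}$ is the integral operator on $L^{2}(d\mu)$ with kernel $K_{z}(w)=\overline{K_{w}(z)}$, and its norm is controlled by the bilinear form
\[
\|I_{\mu}^{*}g\|_{\mathfrak{D}}^{2}=\iint_{\mathbb{D}^{2}\times\mathbb{D}^{2}} g(z)\,\overline{g(w)}\,K_{z}(w)\,d\mu(z)\,d\mu(w).
\]

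For the forward implication I would specialise to nonnegative real $g$. The displayed integral is a nonnegative real number, so it coincides with its own real part; together with Hermitian symmetry $K_{z}(w)=\overline{K_{w}(z)}$ and $g(z)\overline{g(w)}=g(z)g(w)$, the integrand becomes $\mathfrak{R}K_{w}(z)\,g(z)g(w)$. This gives the claimed inequality for $g\ge 0$, and applying it to $g=|h|$ gives the statement for arbitrary $h\in L^{2}(d\mu)$.

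For the converse, the hypothesis controls the bilinear form built from $|g(z)|$, $|g(w)|$ and $\mathfrak{R}K_{w}(z)$, while what I need to bound is the integral of $g(z)\overline{g(w)}K_{z}(w)$. The triangle inequality absorbs the moduli, so the real work is the pointwise domination $|K_{w}(z)|\le C\,\mathfrak{R}K_{w}(z)$. I would prove this by splitting the factored bidisc kernel: each one-dimensional factor $C_{i}+\log\tfrac{2}{1-\overline{w_{i}}z_{i}}$ has imaginary part uniformly bounded by $\pi/2$ while its real part is at least $C_{i}$, so choosing $C_{1},C_{2}$ large enough makes the modulus of each factor at most a fixed multiple of its real part; expanding $\mathfrak{R}(ab)=\mathfrak{R}a\,\mathfrak{R}b-\mathfrak{I}a\,\mathfrak{I}b$ then transfers the estimate to the product kernel. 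Combining, $\|I_{\mu}^{*}g\|_{\mathfrak{D}}^{2}\lesssim \|g\|_{L^{2}(d\mu)}^{2}$, which proves $\|I_{\mu}\|<\infty$.

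The main obstacle is precisely this last pointwise comparison. On the bidisc the real part of a product is not the product of real parts, so a direct tensorisation of a one-dimensional kernel inequality fails; the argument succeeds only because the imaginary parts of the one-dimensional log factors are uniformly bounded, allowing the cross term $\mathfrak{I}a\,\mathfrak{I}b$ to be absorbed by $\mathfrak{R}a\,\mathfrak{R}b$ once $C_{1},C_{2}$ are chosen sufficiently large. After this comparison is in hand, the rest is routine Hilbert-space bookkeeping.
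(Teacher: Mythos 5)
Your argument is correct, and it is worth noting that the paper itself gives no proof of this lemma at all: it simply defers to \cite{Arcozzi2}. What you have written is essentially the standard $TT^{*}$ reconstruction of that reference's argument. The forward direction (restrict to $g\ge 0$, observe that $\|I_{\mu}^{*}g\|_{\mathfrak{D}}^{2}$ is a nonnegative real number and hence equals the integral of $\mathfrak{R}K_{w}(z)\,g(z)g(w)$, then pass to $g=|h|$) is exactly right, and your converse correctly isolates the only genuinely nontrivial point, namely the pointwise comparison $|K_{w}(z)|\le C\,\mathfrak{R}K_{w}(z)$, which fails to tensorise naively because $\mathfrak{R}(ab)\ne \mathfrak{R}a\,\mathfrak{R}b$. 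One caveat deserves emphasis: your absorption of the cross term $\mathfrak{I}a\,\mathfrak{I}b$ into $\mathfrak{R}a\,\mathfrak{R}b$ requires $C_{1}C_{2}$ to exceed roughly $\pi^{2}/4$, and the constants in a reproducing kernel are not free parameters --- they are dictated by the choice of norm. With the norm as defined in Section 1 the honest kernel need not satisfy this; the fix (which is the convention in \cite{Arcozzi2}, and is what the paper's notation with unspecified $C_{1},C_{2}$ and $\log\frac{2}{1-\overline{w_{i}}z_{i}}$ tacitly assumes) is to pass to an equivalent norm whose kernel has each factor with real part bounded below by a large constant and imaginary part bounded by $\pi/2$; the Carleson property is invariant under this renormalisation. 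You should also record that the right-hand side of the stated inequality must be $\|g\|_{L^{2}(\mathbb{D}^{2},d\mu)}^{2}$ rather than $\|g\|_{L^{2}(\mathbb{D}^{2},d\mu)}$ --- your computation produces the squared norm, and the unsquared version as printed is a typo (it is not even scale-invariant in $g$). With those two remarks in place, your proof is complete and is more informative than the citation the paper offers.
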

\begin{proof} The proof can be found in \cite{Arcozzi2}. Also the above lemma holds for all Reproducing Kernel Hilbert Spaces defined in an arbitrary polydisc $\mathbb{D}^n,n>2.$ For more details the interested reader can check \cite{Arcozzi3}
\end{proof}
\begin{lemma}
\textit{Let $\mu$ a positive Borel measure on $\mathbb{D}^2$. If:}
$$\sup_{w\in \mathbb{D}^2}\int_{\mathbb{D}^2}\mathfrak{R}K_{w}(z)d\mu(z)<+\infty$$
\textit{then $\mu$ is a Carleson measure for $\mathfrak{D}(\mathbb{D}^2).$}
\end{lemma}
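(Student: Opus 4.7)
The plan is to use Lemma 3.2 to reduce Carleson-ness to a bilinear inequality on $L^{2}(\mathbb{D}^{2},d\mu)$, and then to run a Schur-type argument driven by the hypothesis. Concretely, set
\[
M := \sup_{w\in\mathbb{D}^2}\int_{\mathbb{D}^2}\mathfrak{R}K_w(z)\,d\mu(z) < \infty;
\]
by Lemma 3.2 it suffices to exhibit a constant $C>0$ such that
\[
\int_{\mathbb{D}^2}\int_{\mathbb{D}^2}\mathfrak{R}K_w(z)\,|g(z)||g(w)|\,d\mu(z)\,d\mu(w) \;\leq\; C\,\|g\|^{2}_{L^{2}(\mathbb{D}^{2},d\mu)}
\]
for every $g\in L^{2}(\mathbb{D}^{2},d\mu)$.

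First I would record the symmetry $\mathfrak{R}K_w(z)=\mathfrak{R}K_z(w)$, which follows from the general identity $K_w(z)=\overline{K_z(w)}$ for any reproducing kernel. Then I would apply the AM--GM inequality
\[
|g(z)||g(w)| \leq \tfrac{1}{2}\bigl(|g(z)|^{2}+|g(w)|^{2}\bigr),
\]
multiply by $\mathfrak{R}K_w(z)$, integrate against $d\mu(z)\,d\mu(w)$, and use the symmetry to fuse the two resulting halves into a single integral. Fubini and the definition of $M$ then give
\[
\int\!\!\int \mathfrak{R}K_w(z)\,|g(z)||g(w)|\,d\mu(z)\,d\mu(w) \;\leq\; \int |g(z)|^{2}\!\left(\int \mathfrak{R}K_w(z)\,d\mu(w)\right)\!d\mu(z) \;\leq\; M\,\|g\|^{2}_{L^{2}(d\mu)},
\]
which is exactly the bilinear estimate required by Lemma 3.2, with $C=M$.

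The genuinely delicate point is that the AM--GM step demands $\mathfrak{R}K_w(z)\geq 0$ pointwise on $\mathbb{D}^{2}\times\mathbb{D}^{2}$, and this is not entirely automatic because $K_w(z)$ is a product of two complex logarithmic factors whose imaginary parts can combine to produce a negative cross term. To control this I would expand, setting $A:=\log\bigl(2/(1-\overline{w_{1}}z_{1})\bigr)$ and $B:=\log\bigl(2/(1-\overline{w_{2}}z_{2})\bigr)$: since $\mathfrak{R}(1-\overline{w_{i}}z_{i})>0$ on $\mathbb{D}\times\mathbb{D}$, one has $\mathfrak{R}A,\mathfrak{R}B\geq 0$ and $|\mathfrak{I}A|,|\mathfrak{I}B|<\pi/2$. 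Writing
\[
\mathfrak{R}K_w(z) = C_{1}C_{2} + C_{1}\,\mathfrak{R}B + C_{2}\,\mathfrak{R}A + \mathfrak{R}(AB)
\]
and using $\mathfrak{R}(AB) \geq -|\mathfrak{I}A\cdot\mathfrak{I}B| \geq -\pi^{2}/4$, the pointwise positivity follows once the constants $C_{1},C_{2}$ in the normalization of $K_w$ are taken sufficiently large (this is the standard convention in \cite{Arcozzi2}). This is the only obstacle I expect; once it is cleared, the rest is the two-line Schur-test calculation sketched above.
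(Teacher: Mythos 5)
Your proof is correct and follows essentially the same route as the paper: reduce to the bilinear estimate of Lemma 3.2, then exploit the symmetry $\mathfrak{R}K_w(z)=\mathfrak{R}K_z(w)$ together with an elementary pointwise inequality (the paper invokes Cauchy--Schwarz where you use AM--GM, but the two are interchangeable here) and Fubini to land on the supremum hypothesis. Your additional verification that $\mathfrak{R}K_w(z)\geq 0$ pointwise --- which both the Cauchy--Schwarz and the AM--GM step silently require --- is a detail the paper omits, and it is worth having.
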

\begin{proof}
By \textit{Cauchy-Schwarz} inequality and the symmetry property of the kernel function, we obtain:
$$
\int_{\mathbb{D}^2}\int_{\mathbb{D}^2}\mathfrak{R}K_w(z)|g(z)||g(w)|d\mu(z)d\mu(w)\leq \int_{\mathbb{D}^2}\int_{\mathbb{D}^2}\mathfrak{R}K_z(w)|g(w)|^2d\mu(z)d\mu(w)$$
and the result follows immediately.
\end{proof}
Now we will state the two dimensional version of Theorem 1.1. of \cite{Fallah} which will be the key for the establishment of the sufficient condition that we stated at the starting point of this subsection. 
\begin{lemma} \textit{Let $\mu$ be two finite positive Borel measures on $\mathbb{D}^2.$ Let  $\psi$ be non decreasing functions with :
$$\int_{0}^{2\pi}\int_{0}^{2\pi}\frac{\psi(xy)}{xy}dxdy<\infty.$$ If} 
$$\mu(S(I\times J))\leq C \psi(|I\times J|)$$
\textit{for $I,J$ two arcs on $\mathbb{T},$ then, the measure $\mu$ is a Carleson measure for $\mathfrak{D}(\mathbb{D}^2).$}
\end{lemma}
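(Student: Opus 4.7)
The plan is to invoke Lemma 3.4 and reduce the problem to showing that
\[
\sup_{w\in\mathbb{D}^2}\int_{\mathbb{D}^2}\mathfrak{R}K_w(z)\,d\mu(z)<+\infty.
\]
Using the explicit product form of the reproducing kernel, the real part is controlled by a product of the two logarithmic pieces (plus harmless bounded correction terms coming from the constants $C_1,C_2$ and the real parts of the individual logarithms). Thus, it suffices to prove that
\[
\sup_{w\in\mathbb{D}^2}\int_{\mathbb{D}^2}\log\!\frac{2}{|1-\overline{w_1}z_1|}\,\log\!\frac{2}{|1-\overline{w_2}z_2|}\,d\mu(z)<+\infty,
\]
uniformly in $w$.

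The main device will be a two-variable layer-cake representation. For each coordinate, I write
\[
\log\!\frac{2}{|1-\overline{w_i}z_i|}=\int_{|1-\overline{w_i}z_i|}^{2}\frac{dt_i}{t_i},
\]
multiply the two expressions, and swap the order of integration using Fubini (all integrands are positive). This reshapes the quantity of interest into
\[
\int_{0}^{2}\!\!\int_{0}^{2}\frac{\mu\bigl(E_{t_1,t_2}(w)\bigr)}{t_1 t_2}\,dt_1\,dt_2,
\qquad
E_{t_1,t_2}(w)=\bigl\{z\in\mathbb{D}^2:|1-\overline{w_i}z_i|<t_i,\ i=1,2\bigr\}.
\]
The set $E_{t_1,t_2}(w)$ is a product of pseudo-hyperbolic slices, and a classical comparison shows that $E_{t_1,t_2}(w)$ is contained in a product Carleson box $S(I_{t_1}\times J_{t_2})$ whose arcs satisfy $|I_{t_1}|\leq c\,t_1$ and $|J_{t_2}|\leq c\,t_2$ for an absolute constant $c$ (the arcs being centered at the boundary projection of $w$). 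Since $\psi$ is non-decreasing, the hypothesis yields
\[
\mu(E_{t_1,t_2}(w))\leq \mu\bigl(S(I_{t_1}\times J_{t_2})\bigr)\leq C\,\psi\bigl(|I_{t_1}\times J_{t_2}|\bigr)\leq C\,\psi(c^2 t_1 t_2).
\]
Plugging this estimate back in and performing the change of variables $x=c\,t_1$, $y=c\,t_2$, the double integral is dominated by
\[
C\int_{0}^{2c}\!\!\int_{0}^{2c}\frac{\psi(xy)}{xy}\,dx\,dy,
\]
which is finite by hypothesis (absorbing $2c\leq 2\pi$ into the constant, or simply extending $\psi$ by a tail that does not affect the hypothesis). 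This gives the uniform bound, and Lemma 3.4 together with Lemma 3.3 delivers the Carleson embedding.

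The only delicate step is the geometric containment $E_{t_1,t_2}(w)\subset S(I_{t_1}\times J_{t_2})$ together with the correct matching $|I_{t_1}\times J_{t_2}|\asymp t_1 t_2$; in the one-variable setting this is a standard computation (when $t_i$ is smaller than a fixed threshold one uses $w_i/|w_i|$ as the arc center, otherwise the whole disc is eaten by a single box and the contribution is trivially finite by the total mass bound on $\mu$). The product structure of both the kernel and the Carleson boxes makes the two-dimensional version essentially a coordinate-wise repetition of this argument, which is why the separable condition $\int\!\int \psi(xy)/(xy)\,dx\,dy<\infty$ is the natural hypothesis. No further machinery beyond Fubini, monotonicity of $\psi$, and the elementary geometry of pseudo-hyperbolic slices is needed.
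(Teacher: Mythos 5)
Your proposal is correct and follows essentially the same route as the paper's proof: reduce to the uniform bound on $\sup_{w}\int_{\mathbb{D}^2}\mathfrak{R}K_w(z)\,d\mu(z)$ via the preceding lemmas, write the product of logarithms by a two-variable layer-cake formula, contain the sets $\{|1-\overline{w_i}z_i|\le t_i\}$ in a Carleson box of comparable side lengths, and conclude from the integrability of $\psi(xy)/(xy)$. Your treatment is in fact slightly more careful than the paper's on the cross terms coming from the constants $C_1,C_2$ and on the large-$t_i$ regime.
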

\begin{proof}
 We will follow the idea of the proof of Theorem 1.1. on \cite{Fallah}. We will bound the integral of the real part of the reproducing kernel. For the measure $\mu$ it is true that:
 $$\int_{\mathbb{D}^2}\mathfrak{R}K_{w}(z)d\mu(z)=\int_{\mathbb{D}^2}\left(C_1+\log\frac{2}{|1-\overline{w_1}z_1|}\right)\left(C_2+\log\frac{2}{|1-\overline{w_2}z_2|}\right)d\mu(z_1,z_2).$$
 The right-hand side integral can be re-written now as follows:
 \begin{multline}
 \int_{\mathbb{D}^2}\left(C_1+\log\frac{2}{|1-\overline{w_1}z_1|}\right)\left(C_2+\log\frac{2}{|1-\overline{w_2}z_2|}\right)d\mu(z_1,z_2)=\\ \int_{\mathbb{D}^2}\left(\int_{|1-\overline{w_1}z_1|=t_1}^{2}\int_{|1-\overline{w_2}z_2|=t_2}^2\frac{1}{t_1}\frac{1}{t_2}dt_1dt_2\right)d\mu(z_1,z_2)\end{multline}
 Now, the product of the sets $\{|1-\overline{w_1}z_1|\leq t_1\}\times \{|1-\overline{w_2}z_2|\leq t_2\}$ is contained inside $S(I\times J)$ where $I\times J$ is the Cartesian product of two arcs, with $|I\times J|=64t_1t_2.$ So now it is easy to observe that
 $$\mu(\{(z_1,z_2)\in \mathbb{D}^2:|1-\overline{w_1}z_1|\leq t_1,|1-\overline{w_2}z_2|\leq t_2\})\leq C\psi(64t_1t_2).$$
 Hence,
 $$\int_{\mathbb{D}^2}\mathfrak{R}K_{w}(z)d\mu(z)\leq C\int_{0}^{8}\int_{0}^{8}\frac{\psi(t_1t_2)}{t_1t_2}dt_1dt_2<+\infty.$$
\end{proof}
At this point we are ready for the proof of Theorem 3.1.
\begin{proof} 
The composition operator $C_{\Phi}:\mathfrak{D}(\mathbb{D}^2)\to A^2_{\beta}(\mathbb{D}^2)$ is bounded if and only if:
$$\int_{\mathbb{D}^2}|f (z_1,z_2)|^2dV_{\beta}\circ \Phi^{-1}(z_1,z_2)\leq C\int_{\mathbb{D}^2}|\partial_{z_2}\partial_{z_1}[z_1z_2f(z_1,z_2)]|^2dV(z_1,z_2),$$
which is equivalent to the measure $V_{\beta}\circ\Phi^{-1}$ being a Carleson measure for $\mathfrak{D}(\mathbb{D}^2)$ By our assumption and Lemma 3.3 this is the case, and the proof is now completed
\end{proof} 
\begin{remark} The above arguments and lemmata work for any polydisc $\mathbb{D}^n,n\ge 2.$ One can find this fact on \cite{Mozolyako}.
\end{remark}
\subsection{A necessary and sufficient condition} In this section we give a necessary and sufficient condition for the boundedness of $C_{\Phi}:\mathfrak{D}(\mathbb{D}^2)\to A^2_{\beta}(\mathbb{D}^2)$. In order to do that we have to give some backround Theorems.  For what follows, we assume that $E$ is a compact subset of the bitorus $\mathbb{T}^2.$ The \textit{Bessel capacity of $E$} with parameter $1/2$ is defined in the following manner. Consider $\theta=(\theta_1,\theta_2)$ and $\eta=(\eta_1,\eta_2)$ such that $\theta_1,\theta_2,\eta_1,\eta_2\in [-\pi,\pi).$ We consider the 2-dimensional extension of the  $1/2$-Bessel kernel on the bi-torus as
$$k_{\mathbb{T}^2,1/2}(\theta,\eta)=|\theta_1-\eta_1|^{-1/2}|\theta_2-\eta_2|^{-1/2}.$$ The above kernel can be extended to a convolution operator as follows:
$$k_{\mathbb{T}^2,1/2}\psi(\theta)=\int_{\mathbb{T}^2}k_{\mathbb{T}^2,1/2}(\theta-\eta)d\psi(\eta).$$
Now let $E\subset \mathbb{T}^2$ be a closed subset of the bi-torus. The Bessel $1/2$-capacity of $E$ is:
$$\mathrm{Cap}_{\mathbb{D}^2}^{1/2}(E)=\inf \{||h||^2_{L^2(\mathbb{T}^2)}:h\ge 0,k_{\mathbb{T}^2,1/2}h\ge 1, on E\}$$
In their seminal work \cite{Arcozzi2} , Arcozzi, Mozolyako, Perfekt and Sarfatti proved the following theorem for the classical Dirichlet space on the bi-disc which characterises the Carleson measures in this space, generalising the result of Stegenga on the disc setting. 
\begin{theorem} \textit{Let $\mu\ge 0$ be a Borel measure on $\overline{\mathbb{D}^2}.$ Then the following are equivalent:}
\begin{enumerate}
\item \textit{There exists a constant $C_1>0$ such that:}
 $$\int_{\overline{\mathbb{D}^2}}|f|^2d\mu \leq C_1||f||_{\mathfrak{D}(\mathbb{D}^2)}.$$
\item \textit{There exists a constant $C_2>0$ such that $\forall n \ge 1$ and for all choices of arcs $J_1^1,...,J_n^1, J_1^2,...,J_n^2$ on $\mathbb{T}$ we have that:}
$$\mu\left(\bigcup_{k=1}^nS(J_k^1\times J_k^2)\right)\leq C_2 \mathrm{Cap}_{\mathbb{D}^2}^{1/2}\left(\bigcup_{k=1}^n J_k^1\times J_k^2\right).$$
\textit{The constants $C_1,C_2$ are comparable independently of $\mu$.}
\end{enumerate}
\end{theorem}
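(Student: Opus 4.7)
The plan is to discretize everything to a bi-parameter dyadic tree $T \times T$, where $T$ is the standard dyadic tree on $\mathbb{T}$. Under this identification, Carleson boxes $S(J_k^1 \times J_k^2)$ correspond to successor sets of nodes, the Dirichlet norm on $\mathfrak{D}(\mathbb{D}^2)$ is equivalent to a weighted bi-parameter $\ell^2$ norm of mixed differences along the bi-tree, and the Bessel $1/2$-capacity of a closed set $E \subset \mathbb{T}^2$ is comparable to a combinatorial capacity on $T \times T$ associated with the bi-parameter Hardy operator. These equivalences are standard preliminaries and reduce the theorem to a statement about measures on the bi-tree.

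For the easier direction $(1) \Rightarrow (2)$, I would test the embedding inequality on the equilibrium potential. Given arcs $J_1^1, \dots, J_n^2$, set $E = \bigcup_{k=1}^n J_k^1 \times J_k^2$ and let $U_E \in \mathfrak{D}(\mathbb{D}^2)$ be the Dirichlet-extremal equilibrium potential for $E$, so that $U_E \ge 1$ quasi-everywhere on $E$ and $\|U_E\|_{\mathfrak{D}(\mathbb{D}^2)}^2 \asymp \mathrm{Cap}_{\mathbb{D}^2}^{1/2}(E)$. A bi-parameter maximum-principle argument (or equivalently a monotonicity argument on the bi-tree) then shows $|U_E| \gtrsim 1$ on $\bigcup_k S(J_k^1 \times J_k^2)$, and inserting $U_E$ into $(1)$ yields $(2)$ with $C_2 \asymp C_1$.

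The harder converse $(2) \Rightarrow (1)$ is where the real work lies. I would first apply Lemma 3.2 to rephrase the Carleson embedding as the bi-linear inequality
\[ \iint_{\mathbb{D}^2 \times \mathbb{D}^2} \mathfrak{R}K_w(z) \, |g(z)||g(w)| \, d\mu(z)\, d\mu(w) \le C \|g\|^2_{L^2(\mu)}, \]
and then discretize $\mathfrak{R}K_w(z)$ as the Green kernel of the bi-parameter Hardy operator on $T \times T$. The claim becomes a bi-linear capacitary inequality for the tree potential of $\mu$. I would prove it by a stopping-time decomposition: at each scale $2^j$ extract a maximal family of bi-rectangles where the tree potential exceeds $2^j$, estimate the $\mu$-mass of their union by hypothesis $(2)$ applied to the associated arcs, and sum a geometric series across $j$.

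The genuine obstacle is the failure of bi-parameter sub-additivity of capacity. In Stegenga's one-variable theorem, logarithmic capacity is sub-additive strongly enough that the condition on a single arc suffices; for product $1/2$-Bessel capacity no such clean sub-additivity holds, which is precisely why $(2)$ must quantify over arbitrary finite unions of rectangles and, as the authors remark in the introduction, precludes a one-box criterion. Overcoming this requires the bi-parameter potential theory on $T \times T$ developed in \cite{Arcozzi2}, in particular a bi-parameter Maz'ya--Hansson type inequality based on weak-type rearrangements adapted to the bi-tree. This is the technical heart of their argument and the step I would have to import rather than reprove from scratch.
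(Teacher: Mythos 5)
There is nothing in the paper to compare your argument against: the statement is Theorem~3.5 of the note only in the sense that it is quoted, with an explicit attribution, from Arcozzi--Mozolyako--Perfekt--Sarfatti \cite{Arcozzi2}; the author imports it as a black box and supplies no proof. Judged on its own terms, your sketch is a fair reconstruction of the architecture of the proof in \cite{Arcozzi2} (bi-tree discretization, equilibrium-potential testing for $(1)\Rightarrow(2)$, a strong capacitary inequality for $(2)\Rightarrow(1)$), but it is not a proof: you explicitly defer the one step where all the difficulty lives, namely the bi-parameter strong capacitary (Maz'ya--type) inequality on the bi-tree. Saying ``sum a geometric series across $j$'' presupposes exactly that inequality, so the hard direction is assumed rather than proved. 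That is a genuine gap, and an unavoidable one if you are not willing to reproduce the combinatorial core of \cite{Arcozzi2}.

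Two points in your sketch are also misstated. First, your diagnosis of the obstacle is off: Stegenga's one-variable theorem \emph{already} requires the capacitary condition over finite unions of disjoint arcs, not a single arc (the one-box statements, as in \cite{Fallah}, are only sufficient conditions), so the need for unions in $(2)$ is not a new bi-parameter phenomenon caused by a failure of sub-additivity. The genuinely new obstruction in \cite{Arcozzi2} is the failure of the potential-theoretic \emph{maximum principle} on the bi-tree, which they circumvent with a surrogate version; this is the reason the strong capacitary inequality is hard there. Second, and relatedly, your easy direction invokes ``a bi-parameter maximum-principle argument'' to get $U_E\gtrsim 1$ on $\bigcup_k S(J_k^1\times J_k^2)$ -- you should not appeal to a principle whose failure is the central difficulty of the subject; what you actually need (and what is true) is the monotonicity of potentials of positive measures along the partial order of the bi-tree. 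With those repairs the outline matches the known proof, but the decisive capacitary estimate still has to be cited, not waved at.
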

We remind that $S(J_k^1\times J_k^2)$ denotes the cartesian product of Carleson boxes associated to the arcs $J^1_k\times J^2_k,$ generalising the concept of the usual one-dimensional Carleson boxes on the bidisc case.
Utilising the previously mentioned tools, we can obtain a necessary and sufficient condition for the boundedness of $C_{\Phi}: \mathfrak{D}(\mathbb{D}^2)\to A^2_{\beta}(\mathbb{D}^2).$ 
We obtain the following
\begin{theorem} \textit{Let $\Phi:\mathbb{D}^2\to \mathbb{D}^2$ holomorphic. Then $C_{\Phi}:\mathfrak{D}(\mathbb{D}^2)\to A^2_{\beta}(\mathbb{D}^2)$ for $\beta>-1,$ is bounded, if and only if there exists a constant $C>0$ such that $\forall n \ge 1$ and for all choices of arcs $I_1^1,...,I_n^1, I_1^2,...,I_n^2$ on $\mathbb{T}$ we have that:}
$$V_{\beta}\left(\Phi^{-1}\left(\bigcup_{k=1}^n S(I_k^1\times I_k^2)\right)\right)\leq C \mathrm{Cap}_{\mathbb{D}^2}^{1/2}\left(\bigcup_{k=1}^n I_k^1\times I_k^2\right).$$ 
\end{theorem}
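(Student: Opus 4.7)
The plan is to reduce the boundedness question to a Carleson-measure problem for the Dirichlet space $\mathfrak{D}(\mathbb{D}^2)$ and then invoke the characterization of Arcozzi--Mozolyako--Perfekt--Sarfatti recalled just above (Theorem 3.2 in this excerpt).

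First I would perform the standard pull-back change of variables. For any $f\in \mathfrak{D}(\mathbb{D}^2)$,
\begin{equation*}
\|C_{\Phi}f\|^2_{A^2_{\beta}(\mathbb{D}^2)}
=\int_{\mathbb{D}^2}|f\circ\Phi(z)|^2\,dV_{\beta}(z)
=\int_{\mathbb{D}^2}|f(w)|^2\,d\mu_{\Phi}(w),
\end{equation*}
where $\mu_{\Phi}:=V_{\beta}\circ\Phi^{-1}$ is a finite positive Borel measure supported on $\mathbb{D}^2$ (since $\Phi$ is a self map of the bidisc and $V_{\beta}(\mathbb{D}^2)<\infty$). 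Consequently, the operator $C_{\Phi}\colon \mathfrak{D}(\mathbb{D}^2)\to A^2_{\beta}(\mathbb{D}^2)$ is bounded if and only if there exists $C>0$ such that
\begin{equation*}
\int_{\overline{\mathbb{D}^2}}|f|^2\,d\mu_{\Phi}\;\leq\;C\,\|f\|^2_{\mathfrak{D}(\mathbb{D}^2)},
\qquad f\in \mathfrak{D}(\mathbb{D}^2),
\end{equation*}
i.e.\ if and only if $\mu_{\Phi}$ is a Carleson measure for $\mathfrak{D}(\mathbb{D}^2)$ in the sense recalled in Section 3.

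Second, I would apply the theorem of Arcozzi, Mozolyako, Perfekt and Sarfatti directly to the measure $\mu_{\Phi}$: condition (1) of that theorem is exactly the Carleson embedding inequality we just derived, and condition (2) translates, after substituting the definition of $\mu_{\Phi}$ via $\mu_{\Phi}(E)=V_{\beta}(\Phi^{-1}(E))$, into
\begin{equation*}
V_{\beta}\Bigl(\Phi^{-1}\Bigl(\bigcup_{k=1}^{n} S(I_k^1\times I_k^2)\Bigr)\Bigr)
\;\leq\; C\,\mathrm{Cap}^{1/2}_{\mathbb{D}^2}\Bigl(\bigcup_{k=1}^{n} I_k^1\times I_k^2\Bigr),
\end{equation*}
for every $n\geq 1$ and every choice of arcs $I_k^j\subset \mathbb{T}$. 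Combining the two equivalences yields the theorem, with constants that are comparable independently of $\Phi$.

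The main technical point to be careful about is the passage between $\mathbb{D}^2$ and $\overline{\mathbb{D}^2}$: the Arcozzi--Mozolyako--Perfekt--Sarfatti characterization is stated for measures on $\overline{\mathbb{D}^2}$, while $\mu_{\Phi}$ is naturally supported in $\mathbb{D}^2$. This is harmless because one may extend $\mu_{\Phi}$ by zero on $\mathbb{T}^2$ (or, equivalently, the Carleson boxes $S(I_k^1\times I_k^2)$ are open subsets of the bidisc and intersecting their union with $\mathbb{D}^2$ does not change the mass of $\mu_{\Phi}$). Likewise, functions in $\mathfrak{D}(\mathbb{D}^2)$ extend to $\overline{\mathbb{D}^2}$ in the sense required by the capacity estimate through the usual density/polynomial approximation argument. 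Beyond this routine verification, the proof is a direct combination of the change-of-variables identity with the deep capacitary characterization, which is itself the hard input we are using as a black box.
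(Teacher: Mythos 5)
Your proposal is correct and follows essentially the same route as the paper: pull back $dV_{\beta}$ under $\Phi$ to reduce boundedness of $C_{\Phi}$ to the Carleson embedding for $\mu_{\Phi}=V_{\beta}\circ\Phi^{-1}$ on $\mathfrak{D}(\mathbb{D}^2)$, then invoke the Arcozzi--Mozolyako--Perfekt--Sarfatti capacitary characterization as a black box. Your additional remark on the harmless passage between $\mathbb{D}^2$ and $\overline{\mathbb{D}^2}$ is a point the paper glosses over, but it does not change the argument.
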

\begin{proof}
The composition operator $C_{\Phi}:\mathfrak{D}(\mathbb{D}^2)\to A^2_{\beta}(\mathbb{D}^2)$ is bounded if and only if
$$\int_{\mathbb{D}^2}|f\circ \Phi(z_1,z_2)|^2dV_{\beta}(z_1,z_2)\leq C\int_{\mathbb{D}^2}|\partial_{z_2}\partial_{z_1}[z_1z_2f(z_1,z_2)]|^2dV(z_1,z_2).$$
By taking the pull-back measure this condition turns into the following:
$$\int_{\mathbb{D}^2}|f (z_1,z_2)|^2dV_{\beta}\circ \Phi^{-1}(z_1,z_2)\leq C\int_{\mathbb{D}^2}|\partial_{z_2}\partial_{z_1}[z_1z_2f(z_1,z_2)]|^2dV(z_1,z_2).$$
But observing the above condition, this is equivalent to the measure $d\mu=dV_{\beta}\circ \Phi^{-1}$ being a Carleson measure for the Dirichlet space on the bi-disc. So by applying Theorem 3.2. we obtain 
that the composition operator $C_{\Phi}:\mathfrak{D}(\mathbb{D}^2)\to A^2_{\beta}(\mathbb{D}^2)$ is bounded if and only if
$$V_{\beta}\left(\Phi^{-1}\left(\bigcup_{k=1}^n S(I_k^1\times I_k^2)\right)\right)\leq C \mathrm{Cap}_{\mathbb{D}^2}^{1/2}\left(\bigcup_{k=1}^n I_k^1\times I_k^2\right)$$ 
for all choices of arcs $I_1^1,...,I_n^1, I_1^2,...,I_n^2$ on $\mathbb{T}$, for all $n\ge 1.$
\end{proof}
\begin{remark} In the inequality above we can replace the 1/2-Bessel capacity with the logarithmic capacity generated by the kernel function:
$$K_{w}(z)=\left(C_1+\log\frac{2}{1-\overline{w_1}z_1}\right)\left(C_2+\log\frac{2}{1-\overline{w_2}z_2}\right)$$
for $z=(z_1,z_2), w=(w_1,w_2)\in\mathbb{D}^2.$
This fact is pointed out in \cite{Arcozzi2}. 
\end{remark}
\begin{remark} The capacity of the the product of arcs can be replaced by the capacity of the corresponding 2-dimensional Carleson box 
\end{remark}
\section{Further discussion} Observing the work that has been done for the case of Bergman and Hardy spaces on the bidisc by Bayart and Kosinski in \cite{Bayart1}, \cite{Bayart2}, \cite{Kosinski} we can say that these types of volume conditions can help us provide characterizations of the symbols that induce bounded composition operators between Bergman and Dirichlet spaces. It would be more pleasant to work only on the Dirichlet-type space on the bidisc, but as we mentioned before, the involvement of the derivative is what makes things much more complicated. The necessary and sufficient condition we provide here for the boundedness of $C_{\Phi}:\mathfrak{D}(\mathbb{D}^2)\to A^2_{\beta}(\mathbb{D}^2)$ is still quite difficult to work with because it does not involve only one Carleson box but rather a finite union of them. The sufficient condition on the other hand has the positive aspect of being stated only for one Carleson box. Another interesting observation we should point out here is that Theorem 3.3. is true only for the bidisc, as the analogue of Theorem 3.2. is not proved for the poly-disc but only for the bidisc setting. Last but not least, for the case of the anisotropic Dirichlet-type spaces, we sadly do not yet have the extension of Theorem 3.2. for all $a_i<1/2,$ and this is the main reason that Theorem 3.2. is only stated for the unweighted Dirichlet space. If Theorem 3.2. happens to be true for some set of parameters $a_i$ then we can characterize the composition operators that are bounded between the Bergman and Dirichlet-type spaces for the anisotropic setting for the said set of parameters.\\
Another interesting topic to be discussed, is the extension of the above techniques to other domains. If a domain  $\Omega\subset\mathbb{C}^n$ is a product domain, then the separated symbols case seems to provide similar results as the ones obtained in our work, but in Bergman spaces of the said domain $\Omega.$ 
\section{Acknowledgements:} The author would like to thank Prof. N.Chalmoukis and Prof. K-M Perfekt for the helpful correspondence during the preparation of this note. I would also like to thank the anonymous referee for the constructive suggestions that improved the readability of the paper significantly.
\section{Financial support}
The author was partially supported by the National Science Center, Poland, SHENG III, research project 2023/48/Q/ST1/00048\\\\

Athanasios Beslikas,\\
Doctoral School of Exact and Natural Studies\\
Institute of Mathematics,\\
Faculty of Mathematics and Computer Science,\\
Jagiellonian University\\ 
\L{}ojasiewicza 6\\
PL30348, Cracow, Poland\\
athanasios.beslikas@doctoral.uj.edu.pl

\end{document}